\begin{document}

\newtheorem{theorem}{Theorem}[section]
\newtheorem{proposition}[theorem]{Proposition}
\newtheorem{example}[theorem]{Example}
\newtheorem{question}[theorem]{Question}
\newtheorem{definition}[theorem]{Definition}
\newtheorem{corollary}[theorem]{Corollary}
\newtheorem{lemma}[theorem]{Lemma}
\newtheorem{claim}[theorem]{Claim}

\newcommand{\pr}[1]{\left\langle #1 \right\rangle}
\newcommand{\mH}{\mathcal{H}}
\newcommand{\mK}{\mathcal{K}}
\newcommand{\uhr}{\upharpoonright}
\newcommand{\cl}{\operatorname{cl}}
\newcommand{\mR}{\mathcal{R}}
\newcommand{\mG}{\mathcal{G}}
\newcommand{\mA}{\mathcal{A}}
\newcommand{\mV}{\mathcal{V}}
\newcommand{\mU}{\mathcal{U}}
\newcommand{\mP}{\mathcal{P}}
\newcommand{\mB}{\mathcal{B}}
\newcommand{\mI}{\mathcal{I}}
\newcommand{\mW}{\mathcal{W}}
\newcommand{\mM}{\mathcal{M}}
\newcommand{\C}{\mathrm{C}}
\newcommand{\mY}{\mathcal{Y}}
\newcommand{\mO}{\mathcal{O}}
\newcommand{\mC}{\mathcal{C}}
\newcommand{\D}{\mathrm{D}}
\renewcommand{\O}{\mathrm{O}}
\newcommand{\K}{\mathrm{K}}
\newcommand{\OD}{\mathrm{OD}}
\newcommand{\Do}{\D_\mathrm{o}}
\newcommand{\sone}{\mathsf{S}_1}
\newcommand{\gone}{\mathsf{G}_1}
\newcommand{\sfin}{\mathsf{S}_\mathrm{fin}}
\newcommand{\gfin}{\mathsf{G}_\mathrm{fin}}
\newcommand{\Em}{\longrightarrow}
\newcommand{\menos}{{\setminus}}
\newcommand{\w}{{\omega}}
\newcommand{\PP}{{\mathbb{P}}}
\newcommand{\QQ}{{\mathbb{Q}}}

\title{Internal characterizations of productively Lindel\"of spaces}
\author[L. F. Aurichi]{Leandro F. Aurichi$^1$}
\thanks{$^1$ The first author was partially supported by FAPESP (2013/05469-7 and 2015/25725).
A part of the results were obtained during the visit of the first author of the Kurt G\"odel Center
at the University of Vienna in January 2017 partially supported by the FWF Grant M 1851-N35.}
\address{Instituto de Ci\^encias Matem\'aticas e de Computa\c{c}\~ao,
Universidade de S\~ao Paulo, Caixa Postal 668,
S\~ao Carlos, SP, 13560-970, Brazil}
\email{aurichi@icmc.usp.br}

\author[L. Zdomskyy]{Lyubomyr Zdomskyy$^2$}
\thanks{$^2$ The second author would like to thank the Austrian Science Fund FWF
(Grants I 1209-N25 and
I 2374-N35) for generous support for this research.}
\address{Kurt Goedel Research Center for Mathematical Logic,
University of Vienna,
Waehringer Strasse 25, A-1090 Wien, Austria}
\email{lzdomsky@gmail.com}
\urladdr{http://www.logic.univie.ac.at/~lzdomsky/}
\subjclass[2000]{Primary: 54D20, 54A35; Secondary: 03E17.}
\keywords{(Productively) Lindel\"of space,  Alster space, Michael space, Menger property.}
\date{}

\begin{abstract}
  We present an internal characterization for the productively Lindel\"of property, thus answering a long-standing problem attributed to Tamano. We also present some results about the relation ``Alster spaces'' \emph{vs.} ``productively Lindel\"of spaces''.
\end{abstract}

\maketitle

\section{Introduction}

We say that a topological space is Lindel\"of if every open covering
for it has a countable subcovering. We say that a Lindel\"of space
$X$ is  productively Lindel\"of  if $X \times Y$ is Lindel\"of for
every Lindel\"of space $Y$. This is a class that contains all
$\sigma$-compact spaces but we do not know yet much more about which
other spaces are in it. For example, it is not know if the space of
the irrationals is productively Lindel\"of  - although  consistently
it is not, see \cite{Moo99} and references therein. This is the
famous Michael's problem.

As with the case of the irrationals, most of the known results about productively Lindel\"of spaces use some kind of
combinatorial hypothesis beyond ZFC (see e.g. \cite{AlaAurJunTal11, Moo99}). So this  property seems  to have a set-theoretic nature.

The main result of the second section is Theorem~\ref{main theorem}
which gives  an internal characterization
 of the class of productively Lindel\"of spaces and thus  solves a problem attributed to H. Tamano in \cite{Przymusinski}.
 The formulation of the property is combinatorial in the sense that it looks like a diagonalization
property. Basically, it says that for a regular space $X$, $X$ is
productively Lindel\" of if, and only if, for every collection $\mV$
of open coverings of $X$ that is ``small enough'', there is a
countable collection $\mC$ of open sets such that $V \cap \mC$ is
still a covering for $X$ for every $V \in \mV$. Here it is important
to stress that we are talking about arbitrary collection of open
coverings: It is not enough to use only open coverings made by
elements of a fixed open base since then the conclusion in
Theorem~\ref{main theorem} for second countable spaces is simply
trivial. However, following the proof presented here, one could use
only sets of the form ``basic open set minus two points''. So far,
we were not able to use this characterization to solve the Michael's
problem.

In the third section we present some new results about the relation ``Alster spaces'' (defined below) \emph{vs} ``productively Lindel\"of spaces''.
In particular, we obtain the original Alster's result (\cite{Alster1986}) as a corollary. This gives a simplification compared to the original proof.
Alster's result is probably the best known one regarding productively Lindel\" of spaces in general. The result presented here has the advantage that
the set-theoretic assumption in it is much weaker than CH.

Finally, in the fourth section, we present a new property which is (formally) in between Alster and productively Lindel\"of ones. One of the applications
 of this property is that it makes  the relation between the properties of Alster and Hurewicz clearer.

\section{An internal characterization for productively Lindel\"of spaces}

The internal formulation (pL) is presented in the first subsection, where it is also showed that if
 the space is productively Lindel\"of, then pL holds. Then, in the second subsection, we show that pL, together with a certain technical property, is enough to prove that a space is productively
Lindel\"of. After that, in the third subsection, we finally show that every regular space has
that technical property, concluding the main result (Theorem \ref{main theorem}). Finally, in the
 last subsection we discuss some cases when this characterization can be extended for spaces that are not regular.

\subsection{A topology over open coverings}

Let $(X, \tau)$ be a topological space. We denote by $\O$ the
collection of all open coverings of $X$. Given a finite $F \subset
\tau$, we use the following notation:
$$F^* = \{\mU \in \O: F \subset \mU\}.$$
Note that, since $F^* \cap G^* = (F \cup G)^*$, the collection
$\{F^*: F \subset \tau$ is finite$\}$ is a base for a topology over
$\O$. Also, note that for any $\mU \in \O$, $\{F^*: F \subset \mU$
is finite$\}$ is a local base for $\mU$. Therefore, the following
definition is just a translation of the Lindel\"ofness property of
subsets of $\O$:

\begin{definition}
Let $(X, \tau)$ be a topological space. We say that   $\mY \subset
\O$ is a {\bf Lindel\"of collection} if, for every $f: \mY
\rightarrow [\bigcup \mY]^{<\w}$ such that $f(\mathcal U)\subset \mathcal
U$ for all $\mathcal U\in\mathcal Y$,  there is a sequence
$(\mU_n)_{n \in \w}$ of coverings in $\mY$ such that for every $\mU \in \mY$, there is an
$n \in \w$ such that $f(\mU_n) \subset \mU$.
\end{definition}

If we cover $X \times \mathcal Y$ in  such a way that each $(x, \mU)
\in A \times \{A\}^*$ for some $A \in \mU$ with $x \in A$, then the
Lindel\"of property of $X \times \mY$ would imply that there is a
sequence $(A_n)_{n \in \omega}$ that is enough to cover the whole of $X
\times \mY$. This motivates the following definition:

\begin{definition}
  Let $(X, \tau)$ be a topological  space. We say that $X$
   has the {\bf pL property} if, for every Lindel\"of collection
   $\mY \subset \O$, there is a sequence $(A_n)_{n \in \w}$ of open
   sets
   such that, for each $(x, \mU) \in X \times \mY$ there is some
   $n \in \w$ such that $x \in A_n \in \mU$.
\end{definition}

After the comments above, it is easy to see the following:

\begin{proposition}\label{easy part}
  Let $(X, \tau)$ be a topological space. If $X$ is
  productively Lindel\"of, then $X$ has the pL property.
\end{proposition}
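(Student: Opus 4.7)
The plan is to realize the intuition sketched just before the definition of the pL property. First I would check that a subset $\mY\subset\O$ is a Lindel\"of collection in the sense of the definition if and only if $\mY$ is Lindel\"of as a subspace of $\O$ (equipped with the topology generated by the base $\{F^*:F\subset\tau\text{ finite}\}$). One direction is immediate, since the condition in the definition is a special case of Lindel\"ofness applied to a basic open cover of $\mY$. For the converse, given any open cover of $\mY$, for each $\mU\in\mY$ I pick a member of the cover containing $\mU$ and shrink it to a basic open neighborhood $f(\mU)^*$ with $f(\mU)\subset\mU$ finite; the defining property then produces countably many $\mU_n\in\mY$ so that $\{f(\mU_n)^*:n\in\w\}$ covers $\mY$, and hence the corresponding original members cover as well.

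With this in hand, suppose $X$ is productively Lindel\"of and $\mY\subset\O$ is a Lindel\"of collection. Then $\mY$ is a Lindel\"of topological space, so $X\times\mY$ is Lindel\"of. For each $(x,\mU)\in X\times\mY$ I choose some $A_{x,\mU}\in\mU$ with $x\in A_{x,\mU}$, which is possible because $\mU$ covers $X$; the open rectangle $A_{x,\mU}\times(\{A_{x,\mU}\}^*\cap\mY)$ then contains $(x,\mU)$. These rectangles form an open cover of $X\times\mY$, and Lindel\"ofness provides a countable subcover, indexed by pairs $(x_n,\mU_n)$. Setting $A_n=A_{x_n,\mU_n}$ yields the required sequence: for every $(x,\mU)\in X\times\mY$ some $n$ satisfies both $x\in A_n$ and $A_n\in\mU$, which is precisely the conclusion of pL.

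I do not foresee any real obstacle. The only step that is more than bookkeeping is the equivalence between ``Lindel\"of collection'' and ``Lindel\"of subspace of $\O$'', which is the standard base-covering argument outlined above. Everything else is the direct translation into a formal proof of the picture the authors paint immediately before introducing the pL property.
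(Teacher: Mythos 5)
Your proposal is correct and is exactly the argument the paper intends: the authors give no written proof, only the remark preceding the definition of pL, and your writeup (including the preliminary check that a Lindel\"of collection is the same as a Lindel\"of subspace of $\O$, via shrinking an arbitrary open cover to basic neighborhoods $f(\mU)^*$ with $f(\mU)\subset\mU$ finite) is precisely the formalization of that remark. No gaps.
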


In the following, we will discuss when the pL property implies a
space is productively Lindel\"of.

\subsection{From $Y$ to $\mY$}

Let $(X, \tau)$ and $(Y, \rho)$ be topological spaces.  Let $\mW$ be
an open covering of $X \times Y$ made by basic open sets. For each
$y \in Y$, define $\mU_y = \{U \in \tau:$ there is a $V \in \rho$
such that $U \times V \in \mW$ and $y \in V\}$. Note that each
$\mU_y$ is an element of $\O$. We define $\mY = \{\mU_y: y \in Y\}$
with the topology defined as before. Note that $\mY$ depends on
$\mW$, but we will not mark this dependency unless necessary.

\begin{proposition}\label{mY is Lindelof}
  If $\mW$ is an open covering of $X \times Y$ and $Y$ is Lindel\"of,
  then $\mY$ is Lindel\"of.
\end{proposition}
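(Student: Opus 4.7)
The plan is to unfold the definition of ``Lindel\"of collection'' and reduce the statement to a direct application of Lindel\"ofness of $Y$. Fix an arbitrary function $f : \mY \to [\bigcup \mY]^{<\omega}$ with $f(\mU_y) \subset \mU_y$ for all $y \in Y$; I need to produce a countable sequence $(y_n)_{n \in \w}$ such that for every $y \in Y$ there is some $n$ with $f(\mU_{y_n}) \subset \mU_y$.

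For each $y \in Y$, $f(\mU_y)$ is a finite subset of $\mU_y$, say $\{U_1^y, \dots, U_{k_y}^y\}$. By the very definition of $\mU_y$, for each $i \le k_y$ I can pick a witness $V_i^y \in \rho$ such that $U_i^y \times V_i^y \in \mW$ and $y \in V_i^y$. Then
$$V^y := \bigcap_{i=1}^{k_y} V_i^y$$
is an open neighborhood of $y$ in $Y$ (finite intersections of open sets are open). The family $\{V^y : y \in Y\}$ is therefore an open cover of $Y$, so by the Lindel\"of property of $Y$ there is a countable $\{y_n : n \in \w\} \subset Y$ with $Y = \bigcup_{n \in \w} V^{y_n}$.

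Finally I will check that the sequence $(\mU_{y_n})_{n \in \w}$ works. Given an arbitrary $y \in Y$, pick any $n \in \w$ with $y \in V^{y_n}$. Then $y$ belongs to every $V_i^{y_n}$ (by definition of $V^{y_n}$), so each pair $(U_i^{y_n}, V_i^{y_n})$ witnesses that $U_i^{y_n} \in \mU_y$. This yields $f(\mU_{y_n}) = \{U_1^{y_n}, \dots, U_{k_{y_n}}^{y_n}\} \subset \mU_y$, as required.

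The argument is essentially routine bookkeeping; the only ``idea'' is the passage from the finitely many witnesses $V_i^y$ attached to the members of $f(\mU_y)$ to their intersection $V^y$, which packages everything relevant about $y$ into a single open set in $Y$ that the ambient Lindel\"of property can then handle. I do not foresee any genuine obstacle.
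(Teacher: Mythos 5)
Your proof is correct and follows essentially the same route as the paper's: enumerate the finitely many chosen elements of $\mU_y$, pick witnesses $V_i^y$ containing $y$, intersect them to get a single neighborhood $V^y$ of $y$, apply Lindel\"ofness of $Y$, and verify the resulting countable family works. The only cosmetic difference is that you phrase everything through the function $f$ from the definition of ``Lindel\"of collection,'' while the paper works directly with the basic open sets $F^*$; these are interchangeable.
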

\begin{proof}
  For each $\mU_y$,  let $\{U_1^y, ..., U_{n_y}^y\} \subset \mU_y$
  (thus $\mU_y \in \{U_1^y, ..., U_{n_y}^y\}^*$). By the definition of
   $\mU_y$, for each $U_i^y$, there is a $V_i^y$ such that
   $y \in V_i^y$ and $U_i^y \times V_i^y \in \mW$. Define
   $V_y = \bigcap_{i = 1}^{n_y} V_i^y$. Since $Y$ is Lindel\"of,
   there is a sequence $(y_k)_{k \in \w}$ such that
   $Y \subset \bigcup_{k \in \w}V_{y_k}$. Let us prove that
    $(\{U_1^{y_k}, ..., U_{n_{y_k}}^{y_k}\}^*)_{k \in \w}$ is a covering
    of $\mY$. Let $\mU_y \in \mY$. Let $k \in \w$ such that
    $y \in V_{y_k} = \bigcap_{i = 1}^{n_{y_k}} V_i^{y_k}$. So, by definition
    of $\mU_y$, each $U_i^{y_k} \in \mU_y$, which  means that
    $\mU_y \in \{U_1^{y_k}, ..., U_{n_{y_k}}^{y_k}\}^*$  and thus this completes our proof.
\end{proof}

Now we will investigate when the pL property applied to $\mY$ is
enough to guarantee that $\mW$ has a countable subcovering. For this,
the following definitions will be helpful:

\begin{definition}
  Let $\mW$ be a covering of $X \times Y$ made by basic open sets.
   Then we say that $\mW$ is an {\bf injective covering} if whenever
   $A \times B, A' \times B' \in \mW$, then $A = A'$ implies $B = B'$.
    We say that $\mW$ is an {\bf $\w$-injective covering} if, for every $A$,
    the set $\{B: A \times B \in \mW\}$ is at most countable.
\end{definition}

\begin{lemma}\label{for injective is ok}
  Let $X$ and $Y$ be Lindel\"of spaces and let
   $\mW$ be an $\w$-injective covering of $X \times Y$.
   If $X$ has the pL property, then $\mW$ has a countable subcovering.
\end{lemma}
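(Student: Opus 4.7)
The plan is to apply the pL property to the Lindel\"of collection $\mY$ furnished by Proposition~\ref{mY is Lindelof}, and then exploit the $\w$-injectivity of $\mW$ to harvest a countable subcovering from the resulting sequence of open subsets of $X$.

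First I would apply Proposition~\ref{mY is Lindelof} to conclude that $\mY=\{\mU_y:y\in Y\}$ is a Lindel\"of collection of open coverings of $X$. Invoking the pL property of $X$ on $\mY$ then yields a sequence $(A_n)_{n\in\w}$ of open subsets of $X$ such that for every pair $(x,\mU_y)\in X\times\mY$ there is some $n\in\w$ with $x\in A_n\in\mU_y$.

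Next, I would use $\w$-injectivity: for each $n\in\w$ the set $\{B\in\rho:A_n\times B\in\mW\}$ is at most countable, so I can enumerate it as $\{B_n^k:k\in\w\}$ (padding by repetition, and simply skipping those $n$ for which this set is empty). I claim the family $\mW'=\{A_n\times B_n^k:n,k\in\w\}$ is a countable subfamily of $\mW$ covering $X\times Y$. To verify, fix $(x,y)\in X\times Y$; by the pL property there is $n\in\w$ with $x\in A_n\in\mU_y$. By the very definition of $\mU_y$ there exists $V\in\rho$ with $y\in V$ and $A_n\times V\in\mW$; hence $V=B_n^k$ for some $k$, and $(x,y)\in A_n\times B_n^k\in\mW'$.

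There is no serious obstacle in this argument: it is a chain of definition unpackings, made to work by the way the topology on $\O$ and the assignment $y\mapsto\mU_y$ were engineered. The key design choice is that the condition ``$A_n\in\mU_y$'' appearing in the pL conclusion already encodes the existence of a $Y$-side witness $V\ni y$ with $A_n\times V\in\mW$; $\w$-injectivity then keeps the set of such witnesses countable per $n$, so that the double enumeration indexed by $n,k\in\w$ remains countable.
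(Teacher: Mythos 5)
Your proof is correct and follows essentially the same route as the paper's: apply Proposition~\ref{mY is Lindelof} to get that $\mY$ is a Lindel\"of collection, invoke pL to obtain $(A_n)_{n\in\w}$, enumerate the countably many $Y$-side factors paired with each $A_n$ via $\w$-injectivity, and verify the resulting countable family covers $X\times Y$ by unpacking the definition of $\mU_y$. No differences worth noting.
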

\begin{proof}
Let $\mathcal Y$ be such as before Proposition \ref{mY is Lindelof}.
 The latter implies that $\mY$ is a Lindel\"of collection.
  Let $(A_n)_{n \in \w}$ be given by the pL property. For each $A_n$,
  let $(B_n^m)_{m \in \w}$ be an enumeration of all open sets $B\subset Y$
   such that
   $A_n \times B\in \mW$. We will show that
    $X \times Y \subset \bigcup_{n,m \in \w} A_n \times B_n^m$.
    Let $(x, y) \in X \times Y$. Let $n \in \w$ be such that
     $x \in A_n \in \mU_y$. Note that, since $A_n \in \mU_y$, there is a
     $B$ such that $y \in B$ and $A_n \times B \in \mW$. Thus
      there is an $m$ such that $B_n^m = B$, and therefore
       $(x, y) \in A_n \times B_n^m$.
\end{proof}

Note that if $X \times Y$ is Lindel\"of, then every open covering of
$X \times Y$ has an $\w$-injective refinement (just go for a
countable refinement made by basic open sets). Therefore, the
following result is an easy consequence of the previous ones:

\begin{theorem}\label{external carac}
  Let $X$ be a Lindel\"of space. Then $X$ is productively
  Lindel\"of if, and only if, $X$ has the pL property and,
  for every Lindel\"of $Y$ and every $\mW$ covering for $X \times Y$,
  there is an $\w$-injective refinement for $\mW$.
\end{theorem}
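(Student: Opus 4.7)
The plan is to assemble this characterization directly from the results already established in the subsection; essentially no new ideas are required beyond careful bookkeeping with refinements.

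For the forward implication, assume $X$ is productively Lindel\"of. The pL property is handed to us by Proposition~\ref{easy part}. For the refinement clause, fix a Lindel\"of $Y$ and an open cover $\mathcal{W}$ of $X\times Y$. By productive Lindel\"ofness, $X\times Y$ itself is Lindel\"of, so (after first shrinking each member of $\mathcal{W}$ to the basic open rectangles $U\times V$ it contains) we can extract a countable cover $\mathcal{W}'$ of $X\times Y$ by basic open rectangles refining $\mathcal{W}$. Any countable family of basic open rectangles is trivially $\omega$-injective, because for each $A$ the set $\{B:A\times B\in\mathcal{W}'\}$ is a subset of the countable set $\mathcal{W}'$.

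For the reverse implication, suppose $X$ has the pL property and, for every Lindel\"of $Y$, every open cover of $X\times Y$ admits an $\omega$-injective refinement. Fix a Lindel\"of $Y$ and an open cover $\mathcal{W}$ of $X\times Y$. Applying the hypothesis, pick an $\omega$-injective refinement $\mathcal{W}'$; by definition of $\omega$-injective covering, $\mathcal{W}'$ consists of basic open rectangles. Lemma~\ref{for injective is ok} then yields a countable subcover $\mathcal{W}''\subset\mathcal{W}'$. Since $\mathcal{W}'$ refines $\mathcal{W}$, for every $B\in\mathcal{W}''$ we may choose a $W(B)\in\mathcal{W}$ with $B\subset W(B)$; the family $\{W(B):B\in\mathcal{W}''\}$ is a countable subcover of the original $\mathcal{W}$. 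As $\mathcal{W}$ and $Y$ were arbitrary, $X\times Y$ is Lindel\"of for every Lindel\"of $Y$, so $X$ is productively Lindel\"of.

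Because all the substantive work has already been done in Proposition~\ref{easy part}, Proposition~\ref{mY is Lindelof}, and Lemma~\ref{for injective is ok}, I do not expect any genuine obstacle. The only point that deserves a sentence of care is the translation between a refinement having a countable subcover and the original cover having one, which is the standard choose-a-containing-member argument used above.
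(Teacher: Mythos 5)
Your proof is correct and follows the paper's own route: the forward direction is Proposition~\ref{easy part} plus the observation that a Lindel\"of product admits a countable (hence $\w$-injective) basic refinement, and the reverse direction is exactly Lemma~\ref{for injective is ok} followed by the standard pass from a refinement's countable subcover back to the original cover. The paper leaves all of this implicit, calling the theorem ``an easy consequence of the previous ones''; you have simply written out the same argument.
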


Thus, for every class of spaces such that it is always possible to find $\w$-injective refinements as above, the productively Lindel\"of property is equivalent to the pL property.
 In the following we will discuss such classes of spaces.

\subsection{Getting $\w$-injective refinements}

The objective of this section is to show that it is possible to find $\w$-injective refinements for every regular space. But some of the results of
this section will be also used in the next section where we consider non-regular spaces.

\begin{lemma}\label{size of B}
  Let $X$ and $Y$ be any spaces, where $Y$ is Lindel\"of and $X$ is $T_1$. Let $\mW$ be an open covering of $X \times Y$ and let $\mB$ be a base for $X$. Then there is an open refinement $\mR$ of $\mW$ such that every element of $\mR$ is of the form $B \times C$ where $B \in \mB$ and, for every $B \in \mB$, $|\{C: B \times C \in \mR\}| \leq |B| + \aleph_0$.
\end{lemma}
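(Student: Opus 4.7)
The plan is to build $\mR$ one vertical slice at a time, using that each slice $\{x\}\times Y$ is homeomorphic to $Y$ and hence Lindel\"of. The cardinality bound will then come for free from the fact that only points $x\in B$ can contribute rectangles with first coordinate $B$.

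Concretely, I would first choose, for every pair $(x,y)\in X\times Y$, some $W_{x,y}\in\mW$ containing $(x,y)$, and then pick inside $W_{x,y}$ a basic rectangle $B_{x,y}\times C_{x,y}$ with $B_{x,y}\in\mB$, $x\in B_{x,y}$, and $y\in C_{x,y}$; this is possible because $\mB$ is a base for $X$ and the basic rectangles form a base for the product topology. Next, fixing $x\in X$ and using that $\{x\}\times Y\cong Y$ is Lindel\"of, I would extract a countable set $D_x\subset Y$ with
\[
\{x\}\times Y\;\subset\;\bigcup_{y\in D_x} B_{x,y}\times C_{x,y},
\]
and then set $\mR:=\{B_{x,y}\times C_{x,y}: x\in X,\ y\in D_x\}$.

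The verification is routine. By construction $\mR$ is a family of basic rectangles with first coordinate in $\mB$, it refines $\mW$, and it covers $X\times Y$ slicewise. For the cardinality bound, fix $B\in\mB$: if $B\times C\in\mR$ then $C=C_{x,y}$ for some $x\in X$ and $y\in D_x$ with $B_{x,y}=B$, and since $x\in B_{x,y}$ this forces $x\in B$. Hence
\[
|\{C: B\times C\in\mR\}|\;\leq\;|\{(x,y): x\in B,\ y\in D_x\}|\;\leq\;|B|\cdot\aleph_0\;=\;|B|+\aleph_0.
\]

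I do not foresee any serious obstacle here: the argument is essentially a slicewise application of the Lindel\"ofness of $Y$ together with a refinement by basic rectangles. The $T_1$ hypothesis on $X$ plays no visible role in the sketch above, so I would expect it to be included either for uniformity with subsequent results, or for a later strengthening of the refinement to sets of the form ``basic open set minus finitely many points'' (as alluded to in the introduction), for which closedness of singletons is needed.
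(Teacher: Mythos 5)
Your argument is correct, but it takes a genuinely different and considerably more elementary route than the paper's. You cover $X\times Y$ slice by slice: for each $x$ the Lindel\"of slice $\{x\}\times Y$ contributes only countably many basic rectangles $B_{x,y}\times C_{x,y}$ with $x\in B_{x,y}$, and since a nonempty rectangle determines its two factors, every $C$ with $B\times C\in\mR$ arises from a pair $(x,y)$ with $x\in B_{x,y}=B$ and $y\in D_x$, giving $|\{C:B\times C\in\mR\}|\leq|B|\cdot\aleph_0=|B|+\aleph_0$. The paper instead treats isolated points separately (countably many rectangles $\{x\}\times C$ per isolated $x$) and, for the remaining points, stratifies $X$ into the sets $X_\kappa$ of points whose smallest basic neighborhood has infinite cardinality $\kappa$; it then runs a transfinite recursion with elementary submodels of size $\kappa$, each stage contributing at most $\kappa$ rectangles $B\times C$ with $|B|=\kappa$, the stages being arranged so that no $B$ occurs as a first factor at two distinct stages, which yields the (for infinite $B$ equivalent) bound $|\{C:B\times C\in\mR\}|\leq|B|$. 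Your proof avoids elementary submodels and the recursion entirely and, as you observe, makes no use of $T_1$; in the paper's proof $T_1$ is what makes the dichotomy ``isolated point versus all basic neighborhoods infinite'' exhaustive. Both versions deliver exactly what the subsequent applications require, since there either $X$ has no isolated points (so $|B|+\aleph_0=|B|$) or only $\w$-injectivity, i.e.\ the bound $\aleph_0$ for countable $B$, is needed.
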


\begin{proof}
For every isolated point $x\in X$ let us fix a countable open covering $\mathcal C_x$
of $Y$ such that for each element of $\{\{x\}\times C:C\in\mathcal C_x\}$
there is an element of $\mathcal W$ containing it. Set $\mathcal R_0=\{\{x\}\times C:x\in X$ is  isolated
and $C\in\mathcal C_x\}$.

  Now let $\kappa$ be an infinite cardinal.
We define $X_\kappa = \{x \in X: \kappa$ is the least $\rho$ such that there is a $B \in \mB$ such that $x \in B$ and $|B| = \rho\}$. Let $\mB_\kappa = \{B \in \mB: |B| = \kappa\}$.

  Let $\{x_\xi: \xi < \lambda\}$ be an enumeration of $X_\kappa$. Let $M_0$ be an elementary submodel
 such that $x_0, X, Y, \mW, \mB \in M_0$, and $|M_0| = \kappa$. Then   $A \subset M_0$ for every
$A \in M_0$ with $|A| \leq \kappa$. Let
$$\mR_0^\kappa = \{B \times C \in M_0: B \times C \text{ refines $\mW$ and }B \in \mB_\kappa\}.$$
Note that $|\mR_0^\kappa| \leq \kappa$.

\begin{claim}
  If $x \in X_\kappa$ and there is a $B \in \mB_\kappa$ such that $B \times C \in \mR_0^\kappa$ for some $C$, then $\{x\} \times Y \subset \bigcup \mR_0^\kappa$.
\end{claim}

\begin{proof}
  Let $B \in \mB_\kappa$ be such that $x \in B$ and $B \times C \in \mR_0^\kappa$.
Since $|B| = \kappa$, $B \subset M_0$. Thus, since $Y$ is Lindel\"of, there is a sequence
 $(B_n \times C_n)_{n \in \w}$ covering $\{x\} \times Y$, refining $\mW$,
and such that $B_n\in\mathcal B_\kappa$ for all $n\in\w$.
By elementarity, we may suppose that this sequence is in $M_0$ and, therefore, each $B_n \times C_n \in \mR_0^\kappa$ which proves the claim.
\end{proof}

Now let $x_\xi$ be the first one such that $x_\xi$ is not covered by any $B$ such that
 $B \times C \in \mR_0^\kappa$ for some $C$. Let $M_\xi$ be an elementary submodel as before, but this time containing $x_\xi$. Define
$$R_\xi^\kappa = \{B \times C \in M_\xi: B  \in \mB_\kappa, B  \smallsetminus A \neq \emptyset \text{ and } B \times C \text{ refines } \mW\}$$
where $A$ is the collection of all $x \in X$ that are covered by some $B$ that $B \times C \in \mR_0^\kappa$ for some $C$.

Note that, again, $|R_\xi^\kappa| \leq \kappa$. Also, if $B \times C \in \mR_0^\kappa$ and $B' \times C' \in \mR_\xi^\kappa$, then $B \neq B'$. Finally, note that the analogous of the Claim for $\mR_\xi^\kappa$ also holds. Thus, we can proceed like this until there is no $x_\eta \in X_\kappa$ not covered. Then, define $\mR_\kappa = \bigcup_{\xi < \lambda} \mR_\xi^\kappa$ (if $\mR_\xi^\kappa$ was not defined, just let it be empty). Finally, note that $\mR = \bigcup_{\kappa < |X|} \mR_\kappa$ is the refinement we were looking for.
\end{proof}

\begin{lemma} \label{l4_3}
  Let $X$ be a $T_1$ space without isolated points  and  $\mB$ be a base of $X$
 such that there are no different  $A, B \in \mB$
with $|A \Delta B| $ finite. Then for every Lindel\"of space $Y$ and every $\mW$
covering for $X \times Y$ there is an injective  refinement of $\mathcal W$.
\end{lemma}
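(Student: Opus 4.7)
The plan is to begin by applying Lemma~\ref{size of B} to the base $\mB$, yielding a refinement $\mR$ of $\mW$ by rectangles $B \times C$ with $B \in \mB$, and such that for each such $B$ the set $\{C : B \times C \in \mR\}$ has cardinality at most $|B| + \aleph_0 = |B|$ (the simplification uses that $X$ is $T_1$ without isolated points, so each nonempty $B \in \mB$ is infinite). For each $B$ appearing in $\mR$, enumerate this set as $\{C_\xi^B : \xi < \lambda_B\}$ with $\lambda_B \leq |B|$. The goal is then to replace each pair $(B, C_\xi^B)$ by a family of rectangles whose first coordinates are open subsets of $B$ that together cover $B$, chosen so that across all the triples $(B, \xi, x)$ these first coordinates are pairwise distinct.

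The key observation is that $T_1$-ness makes $B \setminus F$ open for every $B \in \mB$ and every finite $F \subset B$, and the hypothesis on $\mB$ makes the map $(B, F) \mapsto B \setminus F$ injective on its domain. Indeed, if $B \setminus F = B' \setminus F'$ with $B, B' \in \mB$ and $F, F'$ finite, then $B \Delta B' \subset F \cup F'$ is finite, which forces $B = B'$ by the hypothesis and then $F = F'$. So distinctness of sets of the form $B \setminus F$ \emph{across different} base elements is automatic; only distinctness \emph{within} a fixed $B$ needs to be engineered.

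To engineer this, for each such $B$ pick $2\lambda_B$ distinct points $\{c_\xi^B, d_\xi^B : \xi < \lambda_B\}$ of $B$ (possible since $|B|$ is infinite and $\lambda_B \leq |B|$), and for each $\xi < \lambda_B$ and $x \in B$ set
\[
f_{B, \xi}(x) = \begin{cases} \{c_\xi^B\} & \text{if } x \neq c_\xi^B, \\ \{d_\xi^B\} & \text{if } x = c_\xi^B. \end{cases}
\]
Then $x \notin f_{B, \xi}(x)$ always, and the range of $f_{B, \xi}$ is the two-element family $\{\{c_\xi^B\}, \{d_\xi^B\}\}$, which is disjoint from the analogous range for $\xi' \neq \xi$ by the choice of the points.

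Finally, setting $D_{B, \xi, x} := B \setminus f_{B, \xi}(x)$, the family
\[
\mR' = \{D_{B, \xi, x} \times C_\xi^B : B \in \mB \text{ appears in } \mR,\ \xi < \lambda_B,\ x \in B\}
\]
refines $\mW$ (since $D_{B, \xi, x} \subset B$), covers $X \times Y$ (given $(x, y) \in X \times Y$, pick any $(B, C_\xi^B) \in \mR$ covering $(x, y)$; then $x \in D_{B, \xi, x}$ and $y \in C_\xi^B$), and is injective: $D_{B, \xi, x} = D_{B', \xi', x'}$ forces $B = B'$ and $f_{B, \xi}(x) = f_{B', \xi'}(x')$ by the key observation, and then $\xi = \xi'$ by disjointness of ranges, so $C_\xi^B = C_{\xi'}^{B'}$. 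The only delicate step I expect is the concrete design of the $f_{B, \xi}$; the hypothesis on $\mB$ is what then promotes this per-$B$ combinatorial distinctness to global distinctness of the first coordinates in $\mR'$.
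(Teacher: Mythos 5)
Your proposal is correct and follows essentially the same route as the paper: first invoke Lemma~\ref{size of B}, then for each pair $(B,C)$ in the resulting refinement replace $B\times C$ by the two rectangles $(B\setminus\{c\})\times C$ and $(B\setminus\{d\})\times C$ with the removed pairs of points chosen disjoint across the different $C$'s attached to a fixed $B$, and let the hypothesis on $\mB$ rule out coincidences across distinct base elements. Your $f_{B,\xi}$ bookkeeping is just a more explicit packaging of the same two-rectangle trick.
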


\begin{proof}
  First, applying Lemma \ref{size of B}, we may suppose that every element of $\mW$ is of the
 form $B \times C$ with $B \in \mB$ and such that
$$|\{C: B \times C \in \mathcal W\}| \leq |B|$$
for every $B \in \mB$. Let $B \in \mB$. For each $C$ such that $B \times C \in \mW$,
select different $b_C, d_C \in B$ in such a way that $\{b_C,d_C\}\cap \{b_{C'},d_{C'}\} =\emptyset$
 if $C \neq C'$. Note that $\{(B \smallsetminus \{b_C\}) \times C, (B \smallsetminus \{d_C\}) \times C\}$ refines $B \times C$.
Also, note that if we repeat this process with all the infinite elements of $\mB$,
we get an injective refinement.
\end{proof}

\begin{lemma} \label{l4_4}
  Any regular space with no isolated points has a base $\mB$ such
 that there are no different  $A, B \in \mB$ such that $|A \Delta B| <\aleph_0$.
\end{lemma}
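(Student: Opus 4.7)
The plan is to take $\mB$ to be the family of all regular open subsets of $X$, i.e., all open $U\subseteq X$ with $U=\mathrm{int}(\overline{U})$. First I would verify that $\mB$ is a base: given an open $U$ and $x\in U$, regularity produces an open $V$ with $x\in V\subseteq \overline{V}\subseteq U$, and then $W:=\mathrm{int}(\overline{V})$ is a regular open set satisfying $x\in V\subseteq W\subseteq U$.

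To see that distinct elements of $\mB$ have infinite symmetric difference, I would first establish an auxiliary fact holding in any $T_1$ space with no isolated points (in particular in $X$, since regular is taken to include $T_1$): for every open $V\subseteq X$ and every finite $F\subseteq X$, one has $\overline{V\setminus F}=\overline{V}$. The inclusion $\subseteq$ is immediate. For $\supseteq$, the point is that every nonempty open set in such a space is infinite (iteratively remove a non-isolated point using $T_1$ to stay open and nonempty), so for any $p\in\overline{V}$ and any open neighborhood $W$ of $p$, the set $W\cap V$ is an infinite open set, and hence $W\cap(V\setminus F)\neq\emptyset$.

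Now suppose $A,B\in\mB$ are distinct with $|A\Delta B|<\aleph_0$. Since $X$ is $T_1$, $A\cap B=A\setminus F_A=B\setminus F_B$ for some finite $F_A,F_B\subseteq X$. Applying the auxiliary fact,
\[
\overline{A}=\overline{A\setminus F_A}=\overline{A\cap B}=\overline{B\setminus F_B}=\overline{B},
\]
and taking interiors together with the fact that $A,B$ are regular open yields $A=\mathrm{int}(\overline{A})=\mathrm{int}(\overline{B})=B$, a contradiction.

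The only mildly delicate point is the closure identity $\overline{V\setminus F}=\overline{V}$, which rests on the ``no isolated points plus $T_1$'' hypothesis; once that is in hand, the rest is direct manipulation of the definition of a regular open set, and regularity is used only to ensure that $\mB$ is a base.
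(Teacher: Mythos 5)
Your proof is correct, and it rests on the same choice of base as the paper: the regular open sets. The only real difference is how one verifies that distinct regular open sets have infinite symmetric difference. You argue by contradiction, via the auxiliary closure identity $\overline{V\setminus F}=\overline{V}$ for open $V$ and finite $F$; the paper argues directly: if $A\neq B$ are regular open then, say, $A\not\subseteq B=\mathrm{int}(\overline{B})$ forces $A\setminus\overline{B}\neq\emptyset$, so $A\,\Delta\, B$ contains the non-empty open set $(A\setminus\overline{B})\cup(B\setminus\overline{A})$, which is infinite because $X$ has no isolated points. Both arguments ultimately use the same underlying fact --- that non-empty open sets are infinite in a $T_1$ space without isolated points --- but the paper's version exhibits an explicit infinite open subset of the symmetric difference in one line, whereas yours routes through a contrapositive and an extra lemma. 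Nothing is wrong with your version; it is just slightly longer than necessary.
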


\begin{proof}
  Note that the regular open sets form a base. Also, note that, if $A$ and $B$ are different regular open sets,
then $A \Delta B \supset (A \smallsetminus \overline B) \bigcup (B \smallsetminus \overline A)$,
 and the latter is    a
non-empty open set.  Since $X$ has no isolated points, this set is infinite.
\end{proof}

\begin{lemma}  \label{l4_5}
  Let $X$ be a scattered $T_1$ space and let $Y$ be a Lindel\"of space. Let $\mW$ be an open covering for $X \times Y$. Then there is an $\w$-injective  open refinement of $\mW$.
\end{lemma}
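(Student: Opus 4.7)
The plan is to use the Cantor--Bendixson stratification: since $X$ is scattered, the sequence of derived sets $X^{(0)}=X\supseteq X^{(1)}\supseteq\cdots$ eventually vanishes, so every $x\in X$ has a well-defined Cantor--Bendixson rank $r(x)$, the unique ordinal $\alpha$ with $x\in X^{(\alpha)}\setminus X^{(\alpha+1)}$. Since $x$ is isolated in $X^{(r(x))}$, I would fix once and for all an open set $U_x\ni x$ in $X$ with $U_x\cap X^{(r(x))}=\{x\}$. The key property of this choice is that every $y\in U_x\setminus\{x\}$ satisfies $r(y)<r(x)$.

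Next, for each fixed $x\in X$, I would cover $\{x\}\times Y$ by countably many basic open rectangles refining $\mW$ whose first coordinate lies inside $U_x$. To see this is possible, for each $y\in Y$ pick $W\in\mW$ and a basic rectangle $A\times B\subseteq W$ with $(x,y)\in A\times B$, then replace $A$ by $A\cap U_x$; this produces an open cover of the Lindel\"of space $\{x\}\times Y$, and hence a countable subcover $(A_n^x\times B_n^x)_{n\in\w}$ with $A_n^x\subseteq U_x$, each $A_n^x\times B_n^x$ refining some element of $\mW$. Setting $\mR=\{A_n^x\times B_n^x:x\in X,\,n\in\w\}$ then gives a basic-open refinement of $\mW$ that covers $X\times Y$.

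The main step is to verify $\w$-injectivity, and this is where the scatteredness is used in an essential way. I would show that if $A_n^x=A_m^{x'}$ then $x=x'$: the equality gives both $x'\in A_m^{x'}\subseteq U_x$ and, symmetrically, $x\in U_{x'}$, so assuming without loss of generality $r(x)\leq r(x')$, either $x'=x$ or else the defining property of $U_x$ forces $r(x')<r(x)\leq r(x')$, a contradiction. Consequently, for each first coordinate $A$ appearing in $\mR$ the point $x$ with $A\in\{A_n^x:n\in\w\}$ is unique, whence $\{B:A\times B\in\mR\}\subseteq\{B_n^x:n\in\w\}$ is countable, as required. The only real obstacle is arranging the neighborhoods $U_x$ so that this uniqueness step goes through, and the Cantor--Bendixson rank takes care of that cleanly.
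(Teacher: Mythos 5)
Your proof is correct and takes essentially the same approach as the paper's: your Cantor--Bendixson rank decomposition is exactly the paper's stratification into the sets $X_\xi$ of isolated points of the successive derived sets, and the paper likewise shrinks each $A_n^x$ so that $A_n^x\cap X_{r(x)}=\{x\}$, i.e.\ $A_n^x\setminus\{x\}$ consists only of points of strictly smaller rank. The only difference is that you spell out the rank-comparison argument explicitly where the paper simply asserts $A_n^x\neq A_m^y$ for $x\neq y$.
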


\begin{proof}
  Let $X_0$ be the set of all isolated points of $X$. Then, let $X_\xi$ be the set of all isolated points of $X \smallsetminus \bigcup_{\eta < \xi} X_\eta$.

  For every $x \in X_\xi$, since $Y$ is Lindel\"of, we can find $(A_n^x \times B_n^x)_{n \in \w}$
 that is a refinement of $\mW$ that covers $\{x\} \times Y$. We may also assume that
 $A_n^x\smallsetminus \{x\} \subset \bigcup_{\eta < \xi} X_\eta$ for all $x \in X_\xi$ and $n\in\omega$, i.e.,  $X_\xi \cap A_n^x = \{x\}$.
Note that the union of all $(A_n^x \times B_n^x)$'s covers the whole $X \times Y$ and that this is $\w$-injective since $A_n^x \neq A_m^y$ if $x \neq y$.
\end{proof}

Combining Lemmata ~\ref{l4_3}, \ref{l4_4}, \ref{l4_5},  we obtain our main result:

\begin{theorem}\label{main theorem}
  For every regular Lindel\"of space, $X$ is productively Lindel\"of if, and only if, $X$ has the $pL$ property.
\end{theorem}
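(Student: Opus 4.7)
The forward direction is immediate from Proposition \ref{easy part}, so assume $X$ is a regular Lindel\"of space with the pL property. By Theorem \ref{external carac}, it suffices to show that for every Lindel\"of $Y$ and every open cover $\mW$ of $X\times Y$ there is an $\w$-injective refinement of $\mW$. My plan is to combine the three preceding lemmata through a Cantor--Bendixson decomposition: let $P$ denote the perfect kernel of $X$ (the closed subspace obtained by iteratively discarding isolated points through the transfinite), and let $S:=X\menos P$, which is open in $X$ and scattered. Then $P$ is $T_1$, regular (as a closed subspace of a regular space), and has no isolated points in itself, so Lemmas \ref{l4_4} and \ref{l4_3} apply to $P$; meanwhile $S$ is $T_1$ and scattered, so Lemma \ref{l4_5} applies to $S$.

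I first apply Lemma \ref{size of B} (with a fixed base of $X$) to reduce to the case in which every member of $\mW$ is a basic rectangle $A\times C_W$. Then I apply Lemma \ref{l4_5} to $S$ with the cover $\{W\cap(S\times Y):W\in\mW\}$ of $S\times Y$, obtaining an $\w$-injective refinement $\mR_S$; since $S$ is open in $X$, this is automatically an open family in $X\times Y$ refining $\mW$. In parallel, I use the base of $P$ provided by Lemma \ref{l4_4} and apply Lemma \ref{l4_3} to $P$ with the cover $\{W\cap(P\times Y):W\in\mW\}$, obtaining an injective refinement $\mR_P$ by rectangles $B\times C$ with $B$ open in $P$.

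The step I expect to be the main obstacle is lifting $\mR_P$ from $P\times Y$ up to $X\times Y$ while preserving both the refinement property and the injectivity. For each $(B,C)\in\mR_P$ I fix a witnessing basic rectangle $A\times C_W\in\mW$ containing $B\times C$, select any open $A^*\subset X$ with $A^*\cap P=B$, and define the lift $B^X:=A^*\cap A$; then $B^X$ is open in $X$, $B^X\cap P=B$, and $B^X\times C\subset A\times C_W$. Setting $\mR_P^X:=\{B^X\times C:(B,C)\in\mR_P\}$, the family $\mR_S\cup\mR_P^X$ refines $\mW$ and covers $X\times Y$, because $\mR_S$ covers $S\times Y$ while $\mR_P^X$ covers $P\times Y$ via $B^X\cap P=B$. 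The $\w$-injectivity of the combined family survives: inside $\mR_S$ it is given; inside $\mR_P^X$ it is even injective, since $B=B^X\cap P$ makes the lift one-to-one on first coordinates; and across the two families, first coordinates of $\mR_S$ lie inside $S$ while those of $\mR_P^X$ meet $P$, so the two families contribute disjoint first coordinates.
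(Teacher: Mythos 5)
Your proof is correct and follows essentially the same route as the paper, which merely states that Theorem~\ref{main theorem} follows by ``combining'' Lemmata~\ref{l4_3}, \ref{l4_4}, and \ref{l4_5}: the intended combination is exactly your Cantor--Bendixson split into the perfect kernel (handled by Lemmata~\ref{l4_4} and \ref{l4_3}) and the open scattered remainder (handled by Lemma~\ref{l4_5}), fed into Theorem~\ref{external carac}. Your lifting of the refinement from $P\times Y$ to $X\times Y$ via $B^X$ with $B^X\cap P=B$, and the observation that first coordinates from the two pieces cannot coincide, correctly supply the details the paper leaves implicit.
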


Finally, it is worth to mention that the
regularity hypothesis  in the characterization above was used only
to guarantee refinements of open coverings with as many different
open sets as necessary for the proof. In general, we don't know
whether the regularity can be dropped, the next subsection is
devoted to the particular situation when we do not need it.

\subsection{Non-regular spaces}

Even for non regular spaces we can sometimes obtain the same characterization. For a space $X$ we denote by $kc(X)$ the minimal cardinality of a covering of $X$ by its compact subspaces. Note that if
$Y$ is Lindel\"of then any open covering of $X\times Y$ has a subcovering
of size $\leq kc(X)$.

\begin{lemma}\label{injective refinement}
  Let $X$ and $Y$  be Lindel\"of
   spaces, with $X$ being $T_1$ and such that every open subset of $X$ has cardinality at
   least $kc(X)$. Then every open covering $\mW$ of $X \times Y$ has an
   injective
   refinement $\mathcal W'$ consisting of standard basic open subsets of
   $X\times Y$.
   Moreover, if $\mathcal B$ is a base for $X$, then we may
   additionally assume that any $W\in\mathcal W'$ has the form $(B\setminus\{x\})\times V$
   for some $B\in\mathcal B,$ $x\in X$, and open $V\subset Y$.
\end{lemma}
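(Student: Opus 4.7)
The approach is to combine Lemma~\ref{size of B} with a global, transfinite version of the point-removal trick from the proof of Lemma~\ref{l4_3}. First I apply Lemma~\ref{size of B} to $\mW$ with the given base $\mB$ to obtain a refinement $\mR$ of $\mW$ whose members have the form $B \times C$ with $B \in \mB$ and such that $|\{C : B \times C \in \mR\}| \leq |B| + \aleph_0$ for every $B \in \mB$. Then, as recalled in the paragraph preceding the lemma, the Lindel\"ofness of $Y$ together with the compact-cover decomposition of $X$ into $kc(X)$ many compact pieces lets me pass to a subcover $\mR' \subseteq \mR$ of cardinality at most $kc(X)$.

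I focus on the main case $kc(X) \geq \aleph_0$, where the hypothesis forces every nonempty open subset of $X$ to be infinite; the degenerate case $kc(X) < \aleph_0$ reduces to $X$ compact, for which $X \times Y$ is itself Lindel\"of and the refinement is produced by a direct countable argument. Enumerating $\mR' = \{B_\alpha \times C_\alpha : \alpha < \mu\}$ with $\mu \leq kc(X)$, I select by transfinite recursion on $\alpha$ two distinct points $b_\alpha, d_\alpha \in B_\alpha$, and then replace each $B_\alpha \times C_\alpha$ by the pair $(B_\alpha \setminus \{b_\alpha\}) \times C_\alpha$ and $(B_\alpha \setminus \{d_\alpha\}) \times C_\alpha$, whose union equals $B_\alpha \times C_\alpha$ because $b_\alpha \neq d_\alpha$. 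The resulting family $\mW'$ still covers $X \times Y$, and each of its members has the required form $(B \setminus \{x\}) \times V$ with $B \in \mB$, securing the moreover clause.

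The crux is that the markers must be chosen so that $\mW'$ is injective. A short set-theoretic computation shows that the $X$-parts of two elements $(B_\alpha \setminus \{m\}) \times C_\alpha$ and $(B_\beta \setminus \{m'\}) \times C_\beta$ (with $m \in \{b_\alpha, d_\alpha\}$ and $m' \in \{b_\beta, d_\beta\}$) can coincide only if either $B_\alpha = B_\beta$ and $m = m'$, or $B_\alpha \neq B_\beta$ are \emph{siblings} in the sense that $B_\alpha = E \cup \{m\}$ and $B_\beta = E \cup \{m'\}$ with $E = B_\alpha \cap B_\beta$ and $m, m'$ both outside $E$. Accordingly, at stage $\alpha$ of the recursion I forbid, inside $B_\alpha$, (i) every previously chosen marker $b_\beta$ or $d_\beta$ that happens to lie in $B_\alpha$ (blocking same-base collisions), and (ii) for each $\beta < \alpha$ with $|B_\alpha \setminus B_\beta| = |B_\beta \setminus B_\alpha| = 1$ and such that $b_\beta$ or $d_\beta$ is the unique element of $B_\beta \setminus B_\alpha$, the unique element of $B_\alpha \setminus B_\beta$ (blocking sibling collisions). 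The total forbidden set has size at most $3|\alpha| < kc(X) \leq |B_\alpha|$, so fresh markers always remain. This cardinal bookkeeping, which the hypothesis $|B| \geq kc(X)$ is tailor-made to support, is the main obstacle of the proof.
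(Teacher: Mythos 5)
Your proof is correct and follows essentially the same route as the paper's: pass to a subcover of size at most $kc(X)$ consisting of basic rectangles, then by transfinite recursion choose two points to delete from each $X$-side so that the resulting punctured sets are pairwise distinct, which the hypothesis $|B|\geq kc(X)$ makes possible. The preliminary appeal to Lemma~\ref{size of B} is harmless but unnecessary, and your explicit ``sibling'' analysis merely unpacks the paper's requirement $A_\xi\setminus\{x_\xi^i\}\neq A_\eta\setminus\{x_\eta^j\}$.
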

\begin{proof}
  We may assume that $\mW$ consists of standard
  basic open  subsets of $X\times Y$ and  $|\mW| \leq kc(X) = \kappa$.
   Let us write  $\mW$ in the form $  (A_\xi \times B_\xi)_{\xi < \kappa}$.
   We will construct two sequences
   $(x_\xi^1)_{\xi < \kappa}$, $(x_\xi^2)_{\xi < \kappa}$ of elements of $X$ in
    such a way that for every $\xi < \kappa$ we have the following properties:
  \begin{enumerate}
  \item $x_\xi^i \in A_\xi$ for $i = 1, 2$;
  \item $x_\xi^1 \neq x_\xi^2$;
  \item $A_\xi \smallsetminus \{x_\xi^i\} \neq A_\eta \smallsetminus \{x_\eta^j\}$ for every $i, j \in \{1, 2\}$ and every $\eta < \xi$.
  \end{enumerate}
This is easily done by induction
 by our assumption on the cardinality of $A_\xi$. Now note that $\{(A_\xi
\smallsetminus \{x_\xi^i\}) \times B_\xi: \xi < \kappa$ and $i = 1,
2\}$ is the refinement  we were looking for.
\end{proof}

\begin{corollary}\label{carac kcX}
  Let $X$ be a Lindel\"of $T_1$ space such that every open subset
   has cardinality at least $kc(X)$. Then $X$ is productively Lindel\"of if,
   and only if, $X$ has the pL property.
\end{corollary}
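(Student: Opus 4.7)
The plan is to deduce this directly from Theorem~\ref{external carac} and Lemma~\ref{injective refinement}. One direction, namely that productively Lindel\"of implies pL, is already Proposition~\ref{easy part} and does not use any cardinality hypothesis on $X$. So I would focus entirely on the converse: assuming $X$ has the pL property, show that $X\times Y$ is Lindel\"of for every Lindel\"of $Y$.

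By Theorem~\ref{external carac}, what remains is to verify that, for every Lindel\"of $Y$ and every open cover $\mathcal W$ of $X\times Y$, $\mathcal W$ admits an $\w$-injective refinement. This is exactly where the hypothesis ``every open subset of $X$ has cardinality at least $kc(X)$'' enters: it is the hypothesis of Lemma~\ref{injective refinement}, which in fact produces an \emph{injective} (hence certainly $\w$-injective) refinement consisting of standard basic open subsets of $X\times Y$. So I would just cite Lemma~\ref{injective refinement} to supply the refinement, and then Theorem~\ref{external carac} closes the argument.

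Thus the whole proof is essentially two lines: one invocation of Proposition~\ref{easy part} for the trivial direction and one invocation of Lemma~\ref{injective refinement} plugged into Theorem~\ref{external carac} for the nontrivial direction. There is no real obstacle here, since the corollary is precisely the statement that the cardinality assumption on $X$ is strong enough to replace the regularity hypothesis used in Lemmata~\ref{l4_3} and \ref{l4_4} when constructing $\w$-injective refinements. The only thing to be mildly careful about is that $X$ being $T_1$ (and not necessarily regular) is enough for Lemma~\ref{injective refinement}, which indeed it is, since that lemma only requires $T_1$-ness plus the cardinality bound.
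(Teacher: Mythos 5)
Your proposal is correct and is essentially the paper's own proof, which simply cites Theorem~\ref{external carac} together with Lemma~\ref{injective refinement}; your observation that an injective refinement is in particular $\w$-injective is exactly the (implicit) bridge the paper relies on.
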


\begin{proof}
  It follows directly from Theorem \ref{external carac} and Lemma \ref{injective refinement}.
\end{proof}

\begin{proposition}
  Let $X$ be a Lindel\"of $T_1$ space of size $\aleph_1$ and let $Y$ be a Lindel\"of space. Then, for every $\mW$ covering for $X \times Y$, there is an $\w$-injective refinement.
\end{proposition}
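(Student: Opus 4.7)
The plan is to mimic Lemma~\ref{injective refinement}, splitting the cover into a ``small'' part (whose $X$-factor is countable) and a ``big'' part (whose $X$-factor has cardinality $\aleph_1$), and showing that $\omega$-injectivity is automatic for the former and can be achieved by surgery for the latter.

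First, as in earlier arguments, I would refine $\mathcal{W}$ to a cover by basic open rectangles, and, using Lindel\"ofness of $Y$, for each $x\in X$ select a countable subfamily $\mathcal{W}_x\subseteq\mathcal{W}$ covering $\{x\}\times Y$; the union $\mathcal{W}^*=\bigcup_{x\in X}\mathcal{W}_x$ is a subcover of $\mathcal{W}$ of size at most $\aleph_1$ whose elements all have the form $A\times C$ with $x\in A$ for the corresponding $x$. I would split $\mathcal{W}^*=\mathcal{W}^*_{\mathrm{sm}}\cup\mathcal{W}^*_{\mathrm{bg}}$ according to whether the first factor $A$ satisfies $|A|\leq\aleph_0$ or $|A|=\aleph_1$, these being the only options since $|X|=\aleph_1$.

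The small part is automatically $\omega$-injective: if $|A|\leq\aleph_0$ and $A\times C\in\mathcal{W}^*$, then the rectangle arises from some $\mathcal{W}_x$ with $x\in A$; there are at most $\aleph_0$ such $x$, each contributing at most $\aleph_0$ values of $C$, whence $|\{C:A\times C\in\mathcal{W}^*\}|\leq\aleph_0$. For the big part, enumerate $\mathcal{W}^*_{\mathrm{bg}}=\{A_\xi\times C_\xi:\xi<\kappa\}$ with $\kappa\leq\aleph_1$, and run verbatim the inductive construction of Lemma~\ref{injective refinement}, choosing at each step $\xi$ points $x_\xi^1,x_\xi^2\in A_\xi$ satisfying conditions (1)--(3) there. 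This succeeds because the number of forbidden choices at stage $\xi<\omega_1$ is bounded by $4|\xi|\leq\aleph_0$, whereas $|A_\xi|=\aleph_1$. By $T_1$-ness of $X$ the sets $\tilde A_\xi^i=A_\xi\setminus\{x_\xi^i\}$ are open, and $\mathcal{R}_{\mathrm{bg}}=\{\tilde A_\xi^i\times C_\xi:\xi<\kappa,\ i=1,2\}$ is an injective refinement of $\mathcal{W}^*_{\mathrm{bg}}$ covering $\bigcup_\xi A_\xi\times C_\xi$.

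Finally, I would take $\mathcal{R}=\mathcal{W}^*_{\mathrm{sm}}\cup\mathcal{R}_{\mathrm{bg}}$; it refines $\mathcal{W}$ and covers $X\times Y$. The key observation making $\mathcal{R}$ $\omega$-injective is a cardinality gap: every first factor appearing in $\mathcal{R}_{\mathrm{bg}}$ has size $\aleph_1$ (surgery removes one point from an $\aleph_1$-sized set), while every first factor appearing in $\mathcal{W}^*_{\mathrm{sm}}$ has size at most $\aleph_0$, so the two pieces contribute disjoint families of first factors, and the $\omega$-injectivity already verified for each piece transfers to the union. The main obstacle is precisely this potential interaction between the two pieces, and the argument hinges on there being no cardinality strictly between $\aleph_0$ and $\aleph_1$ available for the open sets $\tilde A_\xi^i$ to fall into.
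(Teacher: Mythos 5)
Your argument is correct, but it reaches the conclusion by a genuinely different decomposition than the paper's. The paper splits the \emph{space}: it lets $X'$ be the set of points having a countable neighbourhood, obtains an $\w$-injective refinement over $X'\times Y$ by invoking Lemma~\ref{size of B} (whose proof goes through an elementary-submodel construction), and then runs the surgery of Lemma~\ref{injective refinement} over the closed, hence Lindel\"of, remainder $X''=X\setminus X'$, where every open set meeting $X''$ has cardinality $\aleph_1$. You instead split the \emph{cover}: after passing to the point-indexed subcover $\mW^{*}=\bigcup_{x\in X}\mW_x$ of size at most $\aleph_1$ in which every rectangle of $\mW_x$ has first factor containing $x$ (achievable by Lindel\"ofness of $Y$), the rectangles with countable first factor $A$ are $\w$-injective for free --- only the countably many $x\in A$ can witness $A\times C\in\mW^{*}$, each contributing countably many $C$'s --- while the rectangles with first factor of size $\aleph_1$ are handled by the same transfinite surgery as in Lemma~\ref{injective refinement}, which succeeds because at stage $\xi<\w_1$ only countably many points are forbidden. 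The gluing step via the cardinality gap (punctured $\aleph_1$-sized factors can never coincide with countable ones) is sound, and you make explicit a point the paper leaves implicit for its own union $\mW'\cup\mW''$. What your route buys is a more elementary, self-contained proof with no elementary submodels and no appeal to Lemma~\ref{size of B}; what it gives up is generality, since the counting argument for the small part exploits $|X|=\aleph_1$ in a way that Lemma~\ref{size of B} does not.
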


\begin{proof}
  Let $X'$ be the collection of all points of $X$ that have a countable neighborhood. Let $\mW'$ be a an $\w$-injective refinement for $X' \times Y$ given by the Lemma \ref{size of B}. Note that $X'' = X \smallsetminus X'$ is closed and, therefore, Lindel\"of. Thus we can do the same argument as in the proof of Lemma \ref{injective refinement} and get $\mW''$ an injective refinement for $X'' \times Y$ (note that for each open set, in the argument of the Lemma we can remove points from $X'$). Thus, $\mW' \cup \mW''$ is the refinement we were looking for.
\end{proof}

\begin{corollary}
  Let $X$ be a Lindel\"of $T_1$ space of size $\aleph_1$. Then $X$ is productively Lindel\"of if, and only if, $X$ has the pL property.
\end{corollary}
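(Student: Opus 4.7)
The plan is to combine the two immediately preceding results. The forward direction, ``productively Lindel\"of implies pL,'' is nothing but Proposition~\ref{easy part} and requires no extra argument here. For the reverse direction, I would invoke Theorem~\ref{external carac}: a Lindel\"of space $X$ is productively Lindel\"of if and only if it has the pL property and, additionally, for every Lindel\"of $Y$, every open covering of $X\times Y$ admits an $\w$-injective refinement.

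Thus the only thing left to supply is the second clause, and this is exactly the content of the proposition stated just before the corollary. So the plan is simply to note that under the hypothesis $|X|=\aleph_1$ (with $X$ Lindel\"of and $T_1$), the preceding proposition guarantees that for every Lindel\"of $Y$ and every open covering $\mW$ of $X\times Y$, an $\w$-injective open refinement of $\mW$ exists. Feeding this, together with the assumed pL property, into Theorem~\ref{external carac} yields that $X$ is productively Lindel\"of.

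There is no real obstacle here; the corollary is a direct assembly. The only point that needs a moment of thought is to check that the hypotheses match: Theorem~\ref{external carac} asks for an $\w$-injective refinement for \emph{every} Lindel\"of $Y$, and the preceding proposition delivers exactly this uniformly in $Y$. So the proof is a one-line citation of Theorem~\ref{external carac} and the preceding proposition, in perfect parallel with the proof of Corollary~\ref{carac kcX}.
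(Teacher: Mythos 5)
Your proposal is correct and is exactly the argument the paper intends: the corollary follows directly from Theorem~\ref{external carac} together with the immediately preceding proposition, in the same way that Corollary~\ref{carac kcX} follows from Theorem~\ref{external carac} and Lemma~\ref{injective refinement}. The paper leaves this corollary without an explicit proof precisely because it is this one-line assembly.
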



\section{About Alster coverings}

For this section and the next one, we need some more definitions:

\begin{definition}
  Let $X$ be a topological space. We say that $\mG$ is an {\bf Alster covering} for $X$ if each $G \in \mG$ is a $G_\delta$ set and, for every compact $K \subset X$, there is a $G \in \mG$ such that $K \subset G$. We say that $X$ is an {\bf Alster space} if every Alster covering has a countable subcovering.
\end{definition}

In this section, we present a generalization of the following:

\begin{theorem}[Alster \cite{Alster1986}]\label{alst_orig}
  Suppose CH. Let $X$ be a Tychonoff space with weight $\leq \aleph_1$. Then $X$ is productively Lindel\"of if, and only if, for every Alster covering, there is a countable subcovering, \emph{i.e.}, $X$ is an Alster space.
\end{theorem}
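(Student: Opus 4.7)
My strategy is to leverage Theorem~\ref{main theorem}, which for the Tychonoff (hence regular) space $X$ reduces productive Lindel\"ofness to the pL property. I prove the two implications separately.

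The direction \emph{Alster implies productively Lindel\"of} actually requires neither CH nor the weight bound: it suffices to show Alster implies pL. Given a Lindel\"of collection $\mY\subset\O$, for each compact $K\subset X$ I assign to every $\mU\in\mY$ a finite subcover $f_K(\mU)\subset\mU$ of $K$; the Lindel\"ofness of $\mY$ applied to $f_K$ produces a sequence $(\mU_n^K)_{n\in\w}$ in $\mY$ such that every $\mU\in\mY$ satisfies $f_K(\mU_n^K)\subset\mU$ for some $n$. The set $G_K:=\bigcap_n\bigcup f_K(\mU_n^K)$ is then a $G_\delta$ containing $K$, so $\{G_K:K\subset X\text{ compact}\}$ is an Alster covering. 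Extracting a countable subcovering $(G_{K_i})_{i\in\w}$ by the Alster property and collecting $\mA:=\bigcup_{i,n}f_{K_i}(\mU_n^{K_i})$ yields the pL witness: for $(x,\mU)\in X\times\mY$, some $G_{K_i}$ contains $x$ and some $n^*$ satisfies $f_{K_i}(\mU_{n^*}^{K_i})\subset\mU$, forcing an $A\in\mA\cap\mU$ with $x\in A$.

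For the direction \emph{productively Lindel\"of implies Alster}, Proposition~\ref{easy part} gives pL, and I use CH together with $w(X)\le\aleph_1$ to reduce a given Alster covering $\mG$ to an Alster subcovering $\mG'$ of size $\le\aleph_1$: there are only $\aleph_1^{\aleph_0}=\aleph_1$ (by CH) many $G_\delta$ sets built from a fixed base of size $\aleph_1$, and regularity allows passing from $\mG$ to such base-built $G_\delta$'s. Enumerating $\mG'=\{G_\alpha:\alpha<\omega_1\}$ with $G_\alpha=\bigcap_n U_n^\alpha$, the plan is to construct a Lindel\"of collection $\mY$ (for example, one derived from diagonal covers $\mU_\sigma=\{U_{\sigma(\alpha)}^\alpha:\alpha<\omega_1\}$ for $\sigma$ ranging over a carefully chosen subset of $\omega^{\omega_1}$) so that the pL witness family $(A_k)$ yields a countable subfamily of $\mG'$ covering $X$.

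The main obstacle is precisely this last step in the second direction: a naive construction typically produces countably many opens $U_{n_j}^{\alpha_j}$ whose union is $X$, without forcing the corresponding $G_{\alpha_j}$'s to cover $X$, because for each selected $\alpha_j$ only the single level $n_j$ appears in the witness. A correct design of $\mY$ must coordinate across all $n\in\w$ for each selected $\alpha$, so that pL witnesses recover $G_\alpha=\bigcap_n U_n^\alpha$ in full. This coordination seems to be exactly what CH combined with $w(X)\le\aleph_1$ furnishes, and I expect Theorem~\ref{alst_orig} to ultimately emerge as a corollary of a more general theorem proved earlier in Section~3 through the pL-based framework developed in Section~2.
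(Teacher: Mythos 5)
Your first direction is correct and self-contained: the deduction of pL from the Alster property via the $G_\delta$ sets $G_K=\bigcap_n\bigcup f_K(\mU_n^K)$ is sound (each $\bigcup f_K(\mU_n^K)$ is an open superset of the compact set $K$, so $\{G_K: K\subset X \text{ compact}\}$ is indeed an Alster covering, and the final diagonalization produces a valid pL witness), and combined with Theorem~\ref{main theorem} it yields ``Alster implies productively Lindel\"of'' for regular Lindel\"of spaces. The paper simply cites Alster's original argument for this implication, so here your route is genuinely different, if less direct than the classical one.

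The second direction, however, is a plan rather than a proof, and you say so yourself: you never construct the Lindel\"of collection $\mY$ that would make the pL witnesses recover the whole sets $G_\alpha=\bigcap_n U_n^\alpha$ rather than single levels $U_{n_j}^{\alpha_j}$, and that is exactly where the argument has to happen. The paper does not close this gap with the pL machinery at all. It proves the more general Theorem~\ref{alst_general}: if $X\subset I^{\w_1}$ is productively Lindel\"of \emph{and Menger} and $\mG$ is an $\w_1$-sized Alster covering by Lindel\"of $G_\delta$-subsets of $I^{\w_1}$, then $\mG$ has a countable subcover. CH enters twice: it lets one refine an arbitrary Alster covering of a weight-$\aleph_1$ Tychonoff space to such a $\mG$, and it guarantees the existence of a Michael space, whence every productively Lindel\"of space is Menger. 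The Menger hypothesis is then the engine: one defines ``big'' sets (those not covered by countably many members of $\mG$), uses the projections $p_\alpha$ together with Lemma~\ref{big_compact} and Lemma~\ref{small_covers} to build a strategy for Player I in the Menger game whose failure produces nonempty sets $C_\alpha$ lying outside $p_\alpha[X]$; the union $Y$ of these is shown to be Lindel\"of, and attaching to $Y$ an $\w_1$-sequence of isolated points chosen outside initial segments of $\mG$ yields a Lindel\"of space $Z$ such that $X\times Z$ contains an uncountable closed discrete subspace, contradicting productive Lindel\"ofness. None of this is a ``coordination'' inside a Lindel\"of collection of coverings, and the pL property plays no role. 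The concrete missing ideas in your proposal are the passage from productively Lindel\"of to Menger under CH and the game-theoretic, Michael-space-style argument that exploits it; without something of that strength your hard direction does not go through.
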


In the following, we use the standard notation $I=[0,1]$. Our main result in this section is the following:

\begin{theorem} \label{alst_general}
  Let $X\subset I^{\w_1}$ be a productively Lindel\"of Menger space and
 $\mG$ be an Alster covering of $X$ of size $|\mG|=\w_1$ consisting of
Lindel\"of $G_\delta$-subsets of $I^{\w_1}$. Then there exists $\mG'\in [\mG]^{\w}$
such that $X\subset\bigcup\mG'$.
\end{theorem}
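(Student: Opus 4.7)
The plan is to prove the theorem in three stages. First, I would establish a support lemma: every Lindel\"of $G_\delta$ subset $G$ of $I^{\w_1}$ is supported on countably many coordinates, i.e., $G = \pi_S^{-1}(\pi_S(G))$ for some countable $S \subset \w_1$. Writing $G = \bigcap_n U_n$ with $U_n$ open and using Lindel\"ofness of $G$ to cover it inside each $U_n$ by countably many basic open sets $B_{n,k}$, one obtains $G = \bigcap_n \bigcup_k B_{n,k}$, which depends only on $S := \bigcup_{n,k}\mathrm{supp}(B_{n,k})$. Enumerating $\mG = \{G_\alpha : \alpha < \w_1\}$ and fixing decreasing presentations $G_\alpha = \bigcap_n U_{\alpha,n}$ with $U_{\alpha,n}$ open in $I^{\w_1}$, each family $\mU_n := \{U_{\alpha,n} : \alpha < \w_1\}$ is an open cover of $X$ (because every singleton is compact and $\mG$ is an Alster cover, so every point is in some $G_\alpha \subseteq U_{\alpha,n}$).

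Second, the heart of the argument is to combine the Menger property and productive Lindel\"ofness to extract finite $\mathcal{F}_n \subseteq \mU_n$ for which $(\bigcup\mathcal{F}_n)_n$ forms a $\gamma$-cover of $X$ --- every $x\in X$ lies in $\bigcup\mathcal{F}_n$ for cofinitely many $n$. Menger alone only gives the weaker $X\subseteq\bigcup_n\bigcup\mathcal{F}_n$; the $\gamma$-cover conclusion is what the final step requires. I would try to obtain this by applying productive Lindel\"ofness to $X\times Y$ for a carefully chosen Lindel\"of auxiliary space $Y$ (for instance the one-point Lindel\"ofication of a discrete space of suitable cardinality, or a more intricate construction coding the Menger selections), whose interaction with $X$ promotes the Menger-type selection to the desired Hurewicz one.

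Third, once such $\mathcal{F}_n$ are in hand, set $A := \bigcup_n\{\alpha : U_{\alpha,n}\in\mathcal{F}_n\}$, a countable subset of $\w_1$. For each $x\in X$ and each $n \geq N(x)$, the nonempty finite set of $\alpha$ with $U_{\alpha,n}\in\mathcal{F}_n$ and $x\in U_{\alpha,n}$ yields, via K\"onig's lemma (after refining the $\mathcal{F}_n$'s to ensure the index sets share a common finite core, using the countable support structure from stage one and possibly elementary submodel arguments), a single $\alpha\in A$ with $x\in U_{\alpha,n}$ for infinitely many $n$. The decreasing presentation then gives $x\in\bigcap_n U_{\alpha,n} = G_\alpha$, whence $\{G_\alpha : \alpha\in A\}\in[\mG]^{\w}$ covers $X$. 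The main obstacle is the middle stage: crafting the Menger-to-$\gamma$-cover step and arranging the $\mathcal{F}_n$'s so that K\"onig's lemma yields an \emph{eventually constant} branch; this is where productive Lindel\"ofness, Menger, and the countable-support structure must be delicately combined.
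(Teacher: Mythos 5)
Your first stage is exactly the paper's preliminary lemma (Lindel\"of $G_\delta$-subsets of $I^{\w_1}$ are determined by countably many coordinates), and your third-stage skeleton --- level-$n$ open covers $\mU_n$ extracted from decreasing presentations of the $G_\alpha$'s --- is a standard reduction. But both of the steps you yourself flag as ``the main obstacle'' are genuine gaps, and they are where the entire content of the theorem lies. First, the upgrade from a Menger selection to a $\gamma$-cover selection is not justified: you assert that productive Lindel\"ofness applied to some auxiliary Lindel\"of space $Y$ ``promotes'' the Menger selection to a Hurewicz one, but you never construct $Y$ nor explain why, say, a one-point Lindel\"ofication would achieve this. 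Whether productively Lindel\"of Menger spaces are Hurewicz is itself a delicate matter (it would follow from the Alster property, i.e., from the very statement being proved), so as written this step is either circular or an appeal to an unproved implication. Second, even granting a $\gamma$-cover selection, the passage from ``for all large $n$ there is some $\alpha$ with $U_{\alpha,n}\in\mathcal F_n$ and $x\in U_{\alpha,n}$'' to ``there is a single $\alpha$ with $x\in U_{\alpha,n}$ for infinitely many $n$'' does not follow from K\"onig's lemma: the finite sets $S_n(x)=\{\alpha: U_{\alpha,n}\in\mathcal F_n,\ x\in U_{\alpha,n}\}$ carry no tree structure and may be pairwise disjoint (e.g.\ $S_n(x)=\{\alpha_n\}$ with all $\alpha_n$ distinct), in which case no index recurs. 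This is precisely the known obstruction to deducing Alster-type conclusions from level-by-level selections on the covers $\mU_n$; ``refining the $\mathcal F_n$'s to share a common finite core'' does not repair it, because nothing forces that core to capture a given $x$.

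The paper's proof avoids both problems by arguing by contradiction and building a Michael-type space. Call $A\subset I^{\w_1}$ \emph{big} if $A\cap X$ is not covered by countably many members of $\mG$, and suppose $X$ is big. Using productive Lindel\"ofness of the metrizable projections $p_\alpha[X]$ (via an increasing-cover criterion from \cite{AlaAurJunTal11}) together with the fact that Player $I$ has no winning strategy in the Menger game on $X$, one shows that for unboundedly many $\alpha$ a certain set $C_\alpha$ --- consisting of points of $I^{\w_1}$ that agree with $X$ on all coordinates below $\alpha$, escape $p_\alpha[X]$, and avoid every $G\in\mG$ of small support --- is nonempty; that $Y=\bigcup_\alpha C_\alpha$ is Lindel\"of; and that every open neighbourhood of $Y$ covers $X$ up to a countably $\mG$-coverable set. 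Choosing $x_\alpha\in X\setminus\bigcup_{\beta<\alpha}G_\beta$ and forming $Z=\{x_\alpha:\alpha<\w_1\}\cup Y$ with the $x_\alpha$ isolated, one gets a Lindel\"of space $Z$ such that $\{(x_\alpha,x_\alpha):\alpha<\w_1\}$ is closed discrete in $X\times Z$, contradicting productive Lindel\"ofness. You would need to supply an argument of comparable substance for your middle and final stages before the proposal could count as a proof.
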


First, note that if a space is an Alster space, then it is productively Lindel\"of. This is the ``easy'' part of Theorem \ref{alst_orig} in \cite{Alster1986}. Also, note that any Tychonoff space $X$ with weight $\leq \aleph_1$ can be viewed as a subspace of $I^{\w_1}$ and, under CH, given any Alster covering, there is a refinement of size $\aleph_1$ for this covering made by compact $G_\delta$-subsets of $I^{\w_1}$. Finally, under CH, every productively Lindel\"of space is Menger (\cite[Proposition~3.1]{RepZdo12}), since there is a Michael space. With this, we obtain that Theorem \ref{alst_orig} follows from Theorem \ref{alst_general}.

Before proving Theorem~\ref{alst_general} let us draw some corollaries
from it.

\begin{corollary} \label{cor1}
Suppose that there exists a Michael space.
Let $X\subset I^{\w_1}$ be a productively Lindel\"of  space and
 $\mG$ be an Alster covering of $X$ of size $|\mG|=\w_1$ consisting of
Lindel\"of $G_\delta$-subsets of $I^{\w_1}$. Then there exists $\mG'\in [\mG]^{\w}$
such that $X\subset\bigcup\mG'$.
\end{corollary}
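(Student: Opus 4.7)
The plan is very short: this corollary is a straightforward consequence of Theorem~\ref{alst_general} once we observe that the Menger hypothesis there can be dispensed with under the extra assumption that a Michael space exists. The only thing that needs to be checked is that, in the presence of a Michael space, every productively Lindel\"of space is automatically Menger, so that Theorem~\ref{alst_general} applies verbatim.

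First I would recall the general principle, already invoked in the paragraph following Theorem~\ref{alst_general}: by \cite[Proposition~3.1]{RepZdo12}, the existence of a Michael space $M$ implies that every productively Lindel\"of space is Menger. The intuition (and the reason the hypothesis of a Michael space is enough, rather than CH) is that if $X$ is productively Lindel\"of but fails the Menger property, then one can use a non-Menger witness in $X$ to build, together with $M$, an open cover of $X\times M$ with no countable subcover, contradicting the Lindel\"ofness of $X\times M$.

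Having applied this fact to our $X\subset I^{\w_1}$, we conclude that $X$ is a productively Lindel\"of Menger space. The remaining hypotheses of Theorem~\ref{alst_general} are already part of the data of the corollary: $X\subset I^{\w_1}$, and $\mG$ is an Alster covering of $X$ of size $\w_1$ whose members are Lindel\"of $G_\delta$-subsets of $I^{\w_1}$. Theorem~\ref{alst_general} then yields a countable subfamily $\mG'\in[\mG]^{\w}$ with $X\subset\bigcup\mG'$, which is precisely the conclusion.

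There is no real obstacle here: the entire content of the corollary lies in Theorem~\ref{alst_general} and in the cited fact that ``Michael space $\Rightarrow$ productively Lindel\"of spaces are Menger''. The only thing worth emphasizing is that we do not need CH (as in Theorem~\ref{alst_orig}); the existence of a Michael space is a strictly weaker set-theoretic assumption, which is exactly the advertised improvement.
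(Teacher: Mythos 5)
Your proposal is correct and follows exactly the paper's own argument: cite \cite[Proposition~3.1]{RepZdo12} to get that the existence of a Michael space implies every productively Lindel\"of space is Menger, and then apply Theorem~\ref{alst_general}. Nothing is missing.
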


\begin{proof}
By \cite[Proposition~3.1]{RepZdo12} there exists a Michael space if and only if
all productively Lindel\"of spaces are Menger.
\end{proof}

\begin{corollary} \label{cor2}
Suppose that $\mathfrak b=\w_1$ or $\mathit{cov}(\mM)=\mathfrak d$.
Let $X\subset I^{\w_1}$ be a productively Lindel\"of  space and
 $\mG$ be an Alster covering of $X$ of size $|\mG|=\w_1$ consisting of
Lindel\"of $G_\delta$-subsets of $I^{\w_1}$. Then there exists $\mG'\in [\mG]^{\w}$
such that $X\subset\bigcup\mG'$.
\end{corollary}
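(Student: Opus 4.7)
The plan is to reduce Corollary~\ref{cor2} to Corollary~\ref{cor1} by showing that each of the two set-theoretic hypotheses implies the existence of a Michael space. Once this reduction is done, the hypothesis of Corollary~\ref{cor1} is met and the conclusion about the countable subcollection $\mG'\subset\mG$ is immediate.

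For the case $\mathit{cov}(\mM)=\mathfrak d$, nothing is to be done: this is precisely the set-theoretic assumption under which Moore \cite{Moo99} constructs a Michael space. I would just quote his result and apply Corollary~\ref{cor1}.

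For the case $\mathfrak b=\w_1$, I would invoke the classical construction of a Michael space from a dominating (equivalently, unbounded well-ordered) family of size $\w_1$ in $(\w^\w,\leq^*)$: given a $\leq^*$-increasing unbounded sequence $(f_\alpha)_{\alpha<\w_1}$ in $\w^\w$, one refines the topology of the irrationals on the set $\{f_\alpha:\alpha<\w_1\}$ so as to obtain a Lindel\"of space whose product with $\w^\w$ fails to be Lindel\"of. (Essentially, this is the Alster/Lawrence-type construction that goes back to Michael's original paper.) This gives a Michael space under $\mathfrak b=\w_1$, so again Corollary~\ref{cor1} applies.

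The only mildly non-routine step is producing the Michael space from $\mathfrak b=\w_1$; the other direction is a direct quotation of \cite{Moo99}, and the remaining implication ``Michael space exists $\Rightarrow$ productively Lindel\"of $\Rightarrow$ Menger'' is \cite[Proposition~3.1]{RepZdo12}, already used in the proof of Corollary~\ref{cor1}. I do not foresee any genuine obstacle beyond locating the correct reference for the $\mathfrak b=\w_1$ construction.
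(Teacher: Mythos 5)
Your proposal is correct and follows essentially the same route as the paper: both hypotheses are shown to yield a Michael space (the $\mathit{cov}(\mM)=\mathfrak d$ case by Moore \cite{Moo99} and the $\mathfrak b=\w_1$ case by the Michael-style construction from a $\leq^*$-increasing unbounded $\w_1$-sequence), after which Corollary~\ref{cor1} applies. One terminological slip: a dominating family of size $\w_1$ is \emph{not} equivalent to an unbounded well-ordered family of size $\w_1$ (the former is $\mathfrak d=\w_1$, the latter $\mathfrak b=\w_1$); your actual argument correctly uses only the unbounded increasing sequence, so the proof stands.
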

\begin{proof}
 If  $\mathfrak b=\w_1$ or $\mathit{cov}(\mM)=\mathfrak d$, then there exists a
Michael space. More precisely, its existence under
 $\mathfrak b=\w_1$ follows by an almost literal repetition of Michael's
  proof \cite{Mic63}  that $\w^\w$ is not productively Lindel\"of
under CH, and the case $\mathit{cov}(\mM)=\mathfrak d$ has been treated in
\cite[Theorem~2.2]{Moo99}.
\end{proof}

We shall divide the proof of Theorem~\ref{alst_general} into a sequence of
lemmas. For every $\alpha\in\w_1$ we shall denote by $p_\alpha$ the projection
map from $I^{\w_1}$ to $I^\alpha$, i.e., $p_\alpha:(x_\xi)_{\xi<\w_1}\mapsto (x_\xi)_{\xi<\alpha}$.
Let us denote by $\mB_\alpha$ the family of open subsets of $I^{\w_1}$
of the form $\prod_{\xi\in\w_1}U_\xi$, where $U_\xi=I$ for all $\xi\not\in F$
for some finite $F\subset\alpha$, and $U_\xi$ is an interval with rational
end-points for all $\xi\in F$. Thus $\mB:=\bigcup_{\alpha\in\w_1}\mB_\alpha$ is the standard base
for the topology on $I^{\w_1}$.

The following fact may be thought of as folklore.

\begin{lemma} \label{standard_g_deltas}
A $G_\delta$ subset $G$ of $I^{\w_1}$ is  Lindel\"of if, and only if,
there exists $\alpha<\w_1$
such that $G=p_\beta^{-1}[p_\beta[G]]$ for all $\beta\geq\alpha$.
\end{lemma}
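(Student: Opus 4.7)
The plan is to reformulate the condition and then prove each direction separately. First observe that ``there exists $\alpha<\w_1$ such that $G=p_\beta^{-1}[p_\beta[G]]$ for all $\beta\geq\alpha$'' is equivalent to the simpler statement ``there exists a countable $\alpha$ with $G=p_\alpha^{-1}[p_\alpha[G]]$'': if the latter holds for $\alpha$ and $\alpha\subset\beta$, then any $y\in p_\beta^{-1}[p_\beta[G]]$ agrees with some $x\in G$ on coordinates in $\beta$ hence on coordinates in $\alpha$, so $y\in G$. Thus it suffices to work with the simpler reformulation.

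For the ``if'' direction, suppose $G=p_\alpha^{-1}[p_\alpha[G]]$ with $\alpha<\w_1$. Using the canonical homeomorphism $I^{\w_1}\cong I^\alpha\times I^{\w_1\menos\alpha}$, this equality identifies $G$ with $p_\alpha[G]\times I^{\w_1\menos\alpha}$. Because $\alpha$ is countable, $I^\alpha$ is compact metrizable and in particular second countable, so $p_\alpha[G]$ is Lindel\"of; the complementary factor $I^{\w_1\menos\alpha}$ is compact. Hence $G$ is the product of a Lindel\"of space with a compact space, which is Lindel\"of.

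For the ``only if'' direction, write $G=\bigcap_{n\in\w}V_n$ with each $V_n$ open in $I^{\w_1}$. For each $n$, every $x\in G\subset V_n$ lies in some $B\in\mB$ with $B\subset V_n$; the collection of all such basic sets covers $G$, so the assumed Lindel\"ofness of $G$ yields a countable subcollection $\{B_{n,k}:k\in\w\}\subset\mB$ with $G\subset\bigcup_k B_{n,k}$ and $B_{n,k}\subset V_n$ for all $k$. Let $\alpha<\w_1$ be a countable ordinal exceeding all (finitely many) coordinates on which any $B_{n,k}$ depends. Then $W_n:=\bigcup_k B_{n,k}$ satisfies $G\subset W_n\subset V_n$ and, being a union of basic sets depending only on coordinates below $\alpha$, satisfies $W_n=p_\alpha^{-1}[p_\alpha[W_n]]$. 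Consequently $G=\bigcap_n V_n\supset \bigcap_n W_n\supset G$ implies $G=\bigcap_n W_n$, and the right-hand side clearly depends only on coordinates in $\alpha$, giving $G=p_\alpha^{-1}[p_\alpha[G]]$.

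The only subtle point is the extraction step in the ``only if'' direction: one must replace each open neighborhood $V_n$ of $G$ by a smaller open set that still contains $G$ but depends on only countably many coordinates. This is precisely what Lindel\"ofness of $G$ provides, via the countable subcover by basic open subsets of $V_n$; the rest of the argument then amounts to observing that a countable intersection of sets depending on a fixed countable set of coordinates still depends only on those coordinates.
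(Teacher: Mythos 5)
Your proof is correct and follows essentially the same route as the paper's: the ``if'' direction identifies $G$ with $p_\alpha[G]\times I^{\w_1\menos\alpha}$ (Lindel\"of times compact), and the ``only if'' direction writes $G=\bigcap_n V_n$, shrinks each $V_n$ to a countable union $W_n$ of basic open sets via Lindel\"ofness, and takes $\alpha$ above all coordinates involved. The only difference is that you make explicit two small points the paper leaves implicit (the monotonicity in $\beta$ and the sandwich $G=\bigcap_n W_n$), which is fine.
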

\begin{proof}
For the ``if'' part note that $G=p_\alpha^{-1}[p_\alpha[G]]$ implies
$G=p_\alpha[G]\times I^{\w_1\setminus\alpha}$, i.e., $G$ is a product of a metrizable
separable space and a compact space. Such products are obviously Lindel\"of.

Let us prove now the ``only if'' part.
Write $G$ in the form $\bigcap_{n\in\w}U_n$, where $U_n$ is open.
Given any  $y\in G$ and $n\in\w$, fix $B(y,n)\in\mB$ such that $y\in B(y,n)\subset U_n$.
Let $\alpha(y,n)$ be such that $B(y,n)\in\mB_{\alpha(y,n)}$.
Since $G$ is Lindel\"of there exists a countable $Y\subset G$ such that
$G\subset\bigcup_{y\in Y}B(y,n)$ for all $n\in\w$.
Then $G=\bigcap_{n\in\w}\bigcup_{y\in Y}B(y,n)$. It is easy to see that
$\alpha=\sup\{\alpha(y,n):y\in Y,n\in\w\}+1$ is as required.
\end{proof}

We shall need the following result which is a direct consequence
of \cite[Corollary~2.5]{AlaAurJunTal11}.

\begin{proposition} \label{alaaurjuntal}
Let $Z$ be a metrizable space and $\{A_\xi:\xi<\w_1\}$ be
an increasing covering of $Z$ such that for every compact $K\subset Z$
there exists $\alpha\in\w_1$ with the property $K\subset A_\alpha$.
If $A_\xi\neq Z$ for all $\xi$, then $Z$ is not productively Lindel\"of.
\end{proposition}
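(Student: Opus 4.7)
The proposition is essentially the contrapositive of Corollary~2.5 of \cite{AlaAurJunTal11}, so the most economical route is to invoke that result directly. For the sake of sketching a self-contained argument, I would proceed as follows. Assume toward contradiction that $Z$ is productively Lindel\"of. The aim is to exhibit a Lindel\"of space $Y$ such that $Z\times Y$ is not Lindel\"of; the canonical candidate is $Y=L(\w_1)$, the one-point Lindel\"ofication $\w_1\cup\{\infty\}$ in which each $\xi<\w_1$ is isolated and basic neighborhoods of $\infty$ have the form $\{\infty\}\cup(\w_1\setminus C)$ for countable $C\subset\w_1$. This $Y$ is Lindel\"of, so by assumption $Z\times L(\w_1)$ is Lindel\"of and we must contradict this.

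For each $\xi<\w_1$ use $A_\xi\neq Z$ to fix a witness $z_\xi\in Z\setminus A_\xi$. I would then construct an open cover of $Z\times L(\w_1)$ of the form
\[
\{Z\times\{\xi\}:\xi<\w_1\}\ \cup\ \{(Z\setminus F_\xi)\times(\{\infty\}\cup(\w_1\setminus\xi)):\xi<\w_1\},
\]
where $F_\xi\subset Z$ is a closed set containing the tail $\{z_\eta:\eta\geq\xi\}$ of witnesses. The point of this cover is that any countable subfamily involves only countably many ordinal indices, whose supremum $\mu<\w_1$ yields a witness $z_\mu$ lying in every relevant $F_\xi$, so that $(z_\mu,\infty)$ is not covered --- the desired contradiction.

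\textbf{Main obstacle.} The technical heart of the argument is controlling the closed sets $F_\xi$. Naively one would take $F_\xi=\{z_\eta:\eta\geq\xi\}$, but in the second countable metric space $Z$ this need not be closed --- indeed no uncountable closed discrete subset exists, so the witnesses are forced to have accumulation points. The compact-swallowing hypothesis is the lever: any convergent subsequence $z_{\eta_n}\to z$ yields a compact set $K=\{z\}\cup\{z_{\eta_n}\}\subset A_\alpha$ for some $\alpha<\w_1$, which (since $z_{\eta_n}\notin A_{\eta_n}$) forces all indices $\eta_n<\alpha$. This ``bounded-accumulation'' phenomenon, combined with the second countability of $Z$, enables the inductive selection of the witnesses $z_\xi$ (and the closed hulls $F_\xi$) carried out in \cite[Corollary~2.5]{AlaAurJunTal11}, after which the non-Lindel\"ofness of the proposed cover becomes routine.
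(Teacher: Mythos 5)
The paper offers no argument at all for this proposition: it is stated as ``a direct consequence of [AlaAurJunTal11, Corollary~2.5]'', so your primary plan of citing that result is exactly what the authors do. The self-contained sketch you append, however, cannot be repaired as written. The test space $Y=L(\w_1)$ is a Lindel\"of P-space (countable intersections of neighborhoods of $\infty$ are co-countable, hence neighborhoods of $\infty$), and the product of a Lindel\"of P-space with \emph{any} Lindel\"of space is Lindel\"of --- this is classical, and also follows from the easy half of Alster's theorem, since in a P-space every Alster covering is an open covering. So $Z\times L(\w_1)$ is Lindel\"of for every Lindel\"of $Z$, and no contradiction can ever be extracted from it. Concretely, the failure surfaces as follows: your family is never a covering. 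Since $Z$ is Lindel\"of metrizable it is second countable, and the witnesses $\{z_\eta:\eta<\w_1\}$ (which form an uncountable set, as each point lies in some $A_\xi$ and hence can recur only boundedly often) have a condensation point $z$. Every neighborhood of $z$ meets every tail $\{z_\eta:\eta\geq\xi\}$, so $z$ lies in every closed $F_\xi$ containing that tail, and $(z,\infty)$ is left uncovered. The ``bounded accumulation'' you derive from the compact-swallowing hypothesis controls convergent sequences of witnesses but says nothing about condensation points, so no choice of the $z_\eta$ and $F_\xi$ avoids this.

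The actual argument behind Corollary~2.5 runs in the opposite direction, and is the same device used at the end of Section~3 of this paper: one builds a Michael-style space from the witnesses rather than an abstract $L(\w_1)$. Embed $Z$ in a metric compactification $\hat Z$, pick distinct $z_\xi\in Z\setminus A_\xi$, and let $Y=(\hat Z\setminus Z)\cup\{z_\xi:\xi<\w_1\}$ with the $z_\xi$ isolated and the points of $\hat Z\setminus Z$ keeping their $\hat Z$-neighborhoods. Here the compact-swallowing hypothesis is used to prove that $Y$ is Lindel\"of: if open sets $U_i$ of $\hat Z$ cover $\hat Z\setminus Z$, then $\hat Z\setminus\bigcup_i U_i$ is a compact subset of $Z$, hence lies in some $A_\alpha$ and so contains only the countably many witnesses of index below $\alpha$. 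Then $\{(z_\xi,z_\xi):\xi<\w_1\}$ is an uncountable closed discrete subspace of $Z\times Y$ (for $y\in\hat Z\setminus Z$ and $x\in Z$ one separates $x$ from $y$ inside $\hat Z$), contradicting productive Lindel\"ofness. If you want a self-contained proof, this is the construction to reproduce.
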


We shall call $A\subset I^{\w_1}$ \emph{big}
if $A\cap X$ is not covered by any countable subfamily of $\mG$.
If $X$ is not big then there is nothing to prove. So we shall assume that
$X$ is big in the sequel.

\begin{lemma} \label{big_compact}
For every big closed subspace $Z$ of $ X$ and $\alpha\in\w_1$
there exists a compact $K\subset p_\alpha[Z]$ such that
$Z\cap p_\alpha^{-1}[K]$ is big.
\end{lemma}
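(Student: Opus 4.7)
I plan to prove Lemma~\ref{big_compact} by contradiction, invoking Proposition~\ref{alaaurjuntal} on the projection $p_\alpha[Z]$. Note that, being closed in $X$, $Z$ is productively Lindel\"of and Menger; consequently so is its continuous image $p_\alpha[Z]$, which is moreover a separable metrizable space since $\alpha<\w_1$.

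Suppose toward a contradiction that for every compact $K\subset p_\alpha[Z]$ the set $Z\cap p_\alpha^{-1}[K]$ is small, i.e., contained in $\bigcup\mG'$ for some countable $\mG'\subset\mG$. Fix an enumeration $\mG=\{G_\xi:\xi<\w_1\}$ and, for $\xi<\w_1$, set $A_\xi=Z\cap\bigcup_{\eta<\xi}G_\eta$ and $B_\xi=p_\alpha[A_\xi]$. The family $\{B_\xi\}_{\xi<\w_1}$ increasingly covers $p_\alpha[Z]$, and the contradiction hypothesis ensures that every compact $K\subset p_\alpha[Z]$ satisfies $K=p_\alpha\bigl[Z\cap p_\alpha^{-1}[K]\bigr]\subset B_\xi$ for some $\xi$ (using $K\subset p_\alpha[Z]$). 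If moreover $B_\xi\neq p_\alpha[Z]$ holds for every $\xi$, Proposition~\ref{alaaurjuntal} yields that $p_\alpha[Z]$ is not productively Lindel\"of---a contradiction.

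The main obstacle is therefore ruling out the case $B_{\xi_0}=p_\alpha[Z]$ for some $\xi_0<\w_1$. By Lemma~\ref{standard_g_deltas}, since $\xi_0$ is countable, one can choose a countable $\alpha'\geq\alpha$ with $G_\eta=p_{\alpha'}^{-1}[p_{\alpha'}[G_\eta]]$ for all $\eta<\xi_0$. Setting $H=\bigcup_{\eta<\xi_0}p_{\alpha'}[G_\eta]$, a direct calculation gives $p_{\alpha'}[A_{\xi_0}]=p_{\alpha'}[Z]\cap H$. The contradiction hypothesis transfers to level $\alpha'$: for any compact $K'\subset p_{\alpha'}[Z]$, $Z\cap p_{\alpha'}^{-1}[K']\subset Z\cap p_\alpha^{-1}[q[K']]$ is small, where $q:I^{\alpha'}\to I^\alpha$ is the natural projection. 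Rerunning the dichotomy at level $\alpha'$ either produces the desired Proposition~\ref{alaaurjuntal} contradiction, or yields $\xi_1$ with $p_{\alpha'}[A_{\xi_1}]=p_{\alpha'}[Z]$. When $\xi_1\leq\xi_0$, the $H$-representation forces $p_{\alpha'}[Z]\subset H$, so $Z\subset p_{\alpha'}^{-1}[H]=\bigcup_{\eta<\xi_0}G_\eta$, directly contradicting the bigness of $Z$. When $\xi_1>\xi_0$, iterate the procedure at a larger countable ordinal handling $\xi_1$. Transfinitely, taking suprema at countable limits, the strictly increasing sequence $(\xi_\zeta)$ paired with the depth-capturing $(\alpha_\zeta)$ should terminate at some countable stage either via Proposition~\ref{alaaurjuntal} or by forcing $Z\subset\bigcup_{\eta<\xi_\zeta}G_\eta$. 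I expect the limit-stage analysis---specifically, the verification that the iteration cannot persist through all of $\w_1$ without triggering termination, using the fact that depth estimates propagate through countable suprema---to be the most subtle technical point.
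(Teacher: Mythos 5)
Your overall strategy --- contradict Proposition~\ref{alaaurjuntal} applied to $p_\alpha[Z]$ --- is the right one, but your choice of increasing family opens a genuine gap. You take $B_\xi=p_\alpha\bigl[Z\cap\bigcup_{\eta<\xi}G_\eta\bigr]$, the direct image of the covered part, and, as you yourself observe, nothing prevents $B_{\xi_0}=p_\alpha[Z]$: that equality only says every fiber of $p_\alpha$ over $p_\alpha[Z]$ \emph{meets} $\bigcup_{\eta<\xi_0}G_\eta$, not that $Z$ is covered. Your repair --- pass to a level $\alpha'$ capturing the supports of the $G_\eta$ with $\eta<\xi_0$ and iterate --- terminates only if the function $\mu(\alpha')=\min\{\xi:p_{\alpha'}[A_\xi]=p_{\alpha'}[Z]\}$ eventually stabilizes or becomes undefined. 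But $\mu$ is merely non-decreasing in $\alpha'$, and it can be unbounded with $\mu(\alpha)>\sup_{\beta<\alpha}\mu(\beta)$ at every closure point of your construction: a $G_\xi$ whose support sits above $\alpha'$ has a ``fat'' projection to level $\alpha'$, so at each level a few late $G_\xi$'s can inflate $B_{\mu(\alpha')}$ to all of $p_{\alpha'}[Z]$ without $Z$ being covered. In that scenario your transfinite iteration runs through all of $\w_1$ and never triggers either exit; truncating the family below $\mu(\alpha')$ does not rescue Proposition~\ref{alaaurjuntal}, because a compact set may only be absorbed at indices $\geq\mu(\alpha')$, where the family is already everything. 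The termination you ``expect'' at the end is thus exactly the missing step, and I do not see how to supply it along these lines.

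The paper avoids the problem entirely by using the complementary family
$A_\beta=p_\alpha[Z]\setminus p_\alpha\bigl[Z\setminus\bigcup_{\xi<\beta}G_\xi\bigr]$,
i.e., the set of those $t$ whose \emph{entire} fiber $Z\cap p_\alpha^{-1}(t)$ lies in $\bigcup_{\xi<\beta}G_\xi$. This family is still increasing, it still absorbs every compact $K\subset p_\alpha[Z]$ (if $Z\cap p_\alpha^{-1}[K]\subset\bigcup_{\xi<\beta}G_\xi$, then every fiber over a point of $K$ is covered, so $K\subset A_\beta$), and --- this is the point --- $A_\beta=p_\alpha[Z]$ is now literally equivalent to $Z\subset\bigcup_{\xi<\beta}G_\xi$, which the bigness of $Z$ forbids outright. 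With this one change your first paragraph already completes the proof, and the entire second half of your argument becomes unnecessary.
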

\begin{proof}
Let $\mG=\{G_\xi:\xi<\w_1\}$ and $A_\beta=
p_\alpha[Z]\setminus p_\alpha[Z\setminus \bigcup_{\xi<\beta}G_\xi]$.
If there is no compact $K\subset p_\alpha[Z]$ such that
$Z\cap p_\alpha^{-1}[K]$ is big, then
$\{A_\xi:\xi<\w_1\}$ is easily seen to be
an increasing covering of $p_\alpha[Z]$ such that for every compact $K\subset p_\alpha[Z]$
there exists $\xi\in\w_1$ with the property $K\subset A_\xi$,
and $A_\xi\neq p_\alpha[Z]$ for all $\xi$ because $Z$ is big.
Now Proposition~\ref{alaaurjuntal} implies that $p_\alpha[Z]$ is not productively
Lindel\"of, a contradiction.
\end{proof}

For a topological space $Z$ we shall denote by $\O(Z)$ the family of all
open coverings of $Z$. For a subspace $T$ of $Z$ we shall denote by
$\cl_Z(T)$ the closure of $T$ in $Z$.

\begin{lemma} \label{small_covers}
Let $A\subset X$ be a big Lindel\"of subspace.
Then there exists $\alpha\in\w_1$ such that for every $\beta>\alpha$
there exists an open covering $\mW_\beta$ of $p_\beta[A]$ such that
$A\setminus p_\beta^{-1}[\cup\mV]$ is big for all $\mV\in [\mW_\beta]^{<\w}$.
\end{lemma}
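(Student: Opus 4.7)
My plan is to argue by contradiction. Call $\beta<\w_1$ \emph{bad} if every open covering $\mW$ of $p_\beta[A]$ admits a finite $\mV\subset\mW$ with $A\setminus p_\beta^{-1}[\cup\mV]$ not big. Failure of the lemma means the set of bad ordinals is cofinal in $\w_1$.

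My first observation is that badness is downward hereditary in $\beta$: if $\beta<\beta'$ and $\beta'$ is bad, then so is $\beta$. Indeed, any open cover $\mW$ of $p_\beta[A]$ lifts along the canonical projection $p_{\beta'\beta}\colon I^{\beta'}\to I^\beta$ to an open cover $\mW'=\{p_{\beta'\beta}^{-1}[V]:V\in\mW\}$ of $p_{\beta'}[A]$, and the bad finite subfamily of $\mW'$ descends to one of $\mW$ via the identity $p_{\beta'}^{-1}\circ p_{\beta'\beta}^{-1}=p_\beta^{-1}$. Consequently, under the negation every countable ordinal is bad.

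For the contradiction I would pick an increasing sequence $(\beta_n)_{n<\w}$ of countable ordinals, at each stage apply badness at $\beta_n$ to an open cover $\mW_n$ of $p_{\beta_n}[A]$ chosen (using Lemma~\ref{standard_g_deltas}) so that each $V\in\mW_n$ sits inside the $\beta_n$-trace of some $G_\eta$ with $\gamma_\eta\leq\beta_n$, and extract finite $\mV_n\subset\mW_n$ together with countable $C_n\subset\w_1$ satisfying $A\subset p_{\beta_n}^{-1}[\cup\mV_n]\cup\bigcup_{\eta\in C_n}G_\eta$. I would then leverage the Menger property of $X$ (with the pullbacks of the $\mW_n$'s viewed as a sequence of open covers of $A$) to select a countable collection of elements of $\mG$ covering $A$, contradicting bigness. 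An alternative route is to invoke Proposition~\ref{alaaurjuntal} on the metrizable space $p_{\beta^*}[A]$ (with $\beta^*=\sup_n\beta_n$), by constructing an increasing $\w_1$-cover of it by proper subsets whose compact subsets are bounded, and contradicting productive Lindel\"ofness via the fact that $p_{\beta^*}[A]$ is a continuous image of $\cl_X(A)$, which is closed in $X$ and thus productively Lindel\"of.

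The main obstacle is arranging the $\mW_n$ so that each of their elements is constrained to lie inside a member of $\mG$: since each $G_\eta$ is only $G_\delta$ and need not have nonempty interior, the naive choice may fail to cover $p_{\beta_n}[A]$. Here I expect Lemma~\ref{big_compact} to come into play, producing compact subsets of $p_{\beta_n}[A]$ with big preimage around which the cover can be refined; combined with an inductive bookkeeping guaranteeing that $\beta_n$ dominates all previously accumulated $\gamma_\eta$'s, this should deliver the required contradiction.
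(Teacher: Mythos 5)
Your opening move (contradiction from a cofinal set $\Lambda$ of ``bad'' ordinals) matches the paper, and the downward-heredity observation is correct though not needed. From there, however, the proposal diverges into a plan with a genuine gap --- one you flag yourself: there is in general no open cover of $p_{\beta_n}[A]$ each of whose elements lies inside the $\beta_n$-trace of some $G_\eta$, because the members of $\mG$ are merely $G_\delta$ and may have empty interior. Lemma~\ref{big_compact} does not repair this: it yields a compact $K\subset p_{\beta_n}[A]$ with big preimage, but says nothing about fitting open sets inside members of $\mG$. Nor is the Menger property a legitimate tool at this point; in the paper it is reserved for Lemma~\ref{c_alpha}, and Lemma~\ref{small_covers} is proved for an arbitrary big Lindel\"of $A$ using only Lindel\"ofness of $A$, the Alster property of $\mG$, and closure of non-big sets under countable unions.

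The mechanism your sketch is missing is a compactness extraction that postpones any contact with $\mG$ until the very last step. For each $\alpha\in\Lambda$ the paper sets $K_\alpha=\bigcap_{\mW\in\O(p_\alpha[A])}\cl_{p_\alpha[A]}(\cup\mV_\mW)$, which is compact; since $p_\alpha[A]$ is separable metrizable, countably many covers realize this intersection, so $A\setminus p_\alpha^{-1}[K_\alpha]$ is a countable union of non-big sets and hence non-big. Consequently every partial intersection $A\cap\bigcap_{\alpha\in\Lambda\cap\gamma}p_\alpha^{-1}[K_\alpha]$ is big, while $K=\bigcap_{\alpha\in\Lambda}p_\alpha^{-1}[K_\alpha]$ is a compact subset of $A$ (here Lindel\"ofness of $A$ is used to see $K\subset A$). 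Only now does the Alster covering enter: some $G\in\mG$ contains $K$, the sets $A\setminus p_\alpha^{-1}[K_\alpha]$ form an open cover of the Lindel\"of set $A\setminus G$, so countably many of them suffice, forcing a big partial intersection to sit inside $G$ --- a contradiction. Your alternative route via Proposition~\ref{alaaurjuntal} also does not obviously close the gap: to apply it you must produce an increasing $\w_1$-indexed cover of $p_{\beta^*}[A]$ by proper subsets absorbing all compacta, and manufacturing that again requires essentially the $K_\alpha$ construction; moreover the lemma assumes only that $A$ is Lindel\"of, not closed in $X$ or productively Lindel\"of, so passing to $\cl_X(A)$ changes both the projection and the notion of bigness you are tracking.
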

\begin{proof}
Suppose that contrary to our claim
the set $\Lambda$ of all $\alpha\in\w_1$ such that
for every  open covering $\mW $ of $p_\alpha[A]$ there exists
$\mV_\mW \in [\mW]^{<\w}$ for which  $A\setminus p_\alpha^{-1}[\cup\mV_\mW]$ is  \emph{not} big,
is cofinal in $\w_1$.  For every $\alpha\in\Lambda$ the set
$K_\alpha=\bigcap_{\mW\in\O(p_\alpha[A])}\cl_{p_\alpha[A]}(\cup\mV_\mW)$ is a compact subspace of
$p_\alpha[A]$. Since $p_\alpha[A]$ is metrizable separable,
there exists a countable $\mathsf W_\alpha\subset\O(p_\alpha[A])$
such that $K_\alpha=\bigcap_{\mW\in\mathsf W_\alpha}\cl_{p_\alpha[A]}(\cup\mV_\mW)$.
Since for every $\mW\in\mathsf W_\alpha$ the set
$A\setminus p_\alpha^{-1}[\cup\mV_\mW]$ is not big, so is the set
$$ A\setminus p_\alpha^{-1}[K_\alpha] =
 A\setminus \bigcap_{\mW\in\mathsf W_\alpha}p_\alpha^{-1}[\cup\mV_\mW] =
\bigcup_{\mW\in\mathsf W_\alpha} (A\setminus p_\alpha^{-1}[\cup\mV_\mW]) $$
because non-big sets are obviously closed under countable unions.
Therefore
$$ \bigcup _{\alpha\in\Lambda\cap\gamma} (A\setminus p_\alpha^{-1}[K_\alpha]) =
 A\setminus \bigcap _{\alpha\in\Lambda\cap\gamma} p_\alpha^{-1}[K_\alpha]
$$
is not big for every $\gamma\in\w_1$, and hence
$A\cap \bigcap _{\alpha\in\Lambda\cap\gamma} p_\alpha^{-1}[K_\alpha]$ is big
for every $\gamma\in\w_1$. Note that $K:=\bigcap _{\alpha\in\Lambda} p_\alpha^{-1}[K_\alpha]$
is a compact subspace of $I^{\w_1}$. Since $K_\alpha\subset p_\alpha[A]$
for all $\alpha\in\Lambda$ and $A$ is Lindel\"of, we have that $K\subset A$
(suppose there is a $k \in K \setminus A$. Note that $(U_\alpha)_{\alpha < \w_1}$ is an open covering for $A$, where each $U_\alpha = \{a \in A: a(\alpha) \neq k(\alpha)\}$. But such a covering cannot have a countable subcovering since $p_\alpha(k)\in p_\alpha[A]$ for every $\alpha\in\w_1$).
Thus there exists $G\in\mG$ such that $K\subset G$, and hence
$\{A\setminus  p_\alpha^{-1}[K_\alpha]:\alpha\in\Lambda\}$ is an open covering of
$A\setminus G$. Since $A$ is Lindel\"of, so is $A\setminus G$ being
an $F_\sigma$-subset of a Lindel\"of space, and hence
$A\setminus G\subset \bigcup _{\alpha\in\Lambda\cap\gamma} (A\setminus p_\alpha^{-1}[K_\alpha])$
for some $\gamma<\w_1$. Therefore
$G\supset A\cap \bigcap _{\alpha\in\Lambda\cap\gamma} p_\alpha^{-1}[K_\alpha]$
which means that the latter set cannot be big, a contradiction.
\end{proof}

Let us fix a map $\psi:\mG\to\w_1$ such that
$\psi^{-1}(\alpha)$ is countable for all
$\alpha$, and $\psi(G)$ satisfies Lemma~\ref{standard_g_deltas}
for $G$, i.e., $G=p_\beta^{-1}[p_\beta[G]]$ for all $\beta\geq \psi(G)$.
For every $\alpha$ let us consider the set
$C_\alpha=\{z\in I^{\w_1}:p_\beta(z)\in p_\beta[X]$ for all $\beta<\alpha$,
$p_\alpha(z)\not\in p_\alpha[X]$, and $z\not\in G$ for all $G\in\mG$ such that $\psi(G)<\alpha\}$.
Note that $C_\alpha$ depends on $\psi$.

We shall need the following game of length $\omega$:
 In the $n$th  round Player $I$ chooses an open covering
$\mU_n$ of $X$, and Player $\mathit{II}$ responds by choosing a
finite $\mV_n\subset \mU_n$. Player $\mathit{II}$ wins the game if $\bigcup_{n\in\omega}
\bigcup\mV_n =X$. Otherwise, Player $I$ wins.  We shall call this game
 \emph{the Menger game} on $X$.
 Since $X$ is Menger, Player $I$ has no winning strategy in this game on
$X$, see \cite{Hur25} or \cite[Theorem~13]{COC1}.

\begin{lemma} \label{c_alpha}
The set $\Lambda=\{\alpha:C_\alpha\neq\emptyset\}$ is unbounded in $\w_1$.
\end{lemma}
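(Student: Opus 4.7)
My plan is to prove the lemma by contradiction, assuming $\Lambda$ is bounded above by some $\gamma<\w_1$, and then extract a countable subfamily of $\mG$ that covers $X$, contradicting bigness.

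The first step is to translate the hypothesis ``$C_\alpha=\emptyset$ for all $\alpha\geq\gamma$'' into a workable geometric statement. Set $T_\alpha=\bigcap_{\beta<\alpha}p_\beta^{-1}[p_\beta[X]]$. Since each $p_\beta^{-1}[p_\beta[X]]$ is a cylinder in the coordinates $\geq\beta$, $T_\alpha$ itself is a cylinder in the coordinates $\geq\alpha$, i.e. $T_\alpha=p_\alpha[T_\alpha]\times I^{\w_1\setminus\alpha}$, and $T_\alpha\setminus T_{\alpha+1}$ is the cylinder over $p_\alpha[T_\alpha]\setminus p_\alpha[X]$. By Lemma~\ref{standard_g_deltas}, every $G$ with $\psi(G)<\alpha$ is also a cylinder in the coordinates $\geq\alpha$, so $C_\alpha=\emptyset$ becomes the separable-metrizable inclusion
\[ p_\alpha[T_\alpha]\setminus p_\alpha[X]\;\subset\;\bigcup\{p_\alpha[G]:\psi(G)<\alpha\} \]
inside $I^\alpha$, or equivalently $T_\alpha\subset p_\alpha^{-1}[p_\alpha[X]]\cup\bigcup\psi^{-1}[\alpha]$. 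Iterating this inclusion and using that $\psi^{-1}[\alpha]$ is monotone in $\alpha$, one obtains $T_\gamma\subset T_{\w_1}\cup\bigcup\mG$ where $T_{\w_1}=\bigcap_{\alpha<\w_1}T_\alpha$.

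The second step is to couple this with the Menger game. I would apply Lemma~\ref{small_covers} to the big Lindel\"of space $A=X$ to obtain $\alpha^{*}\geq\gamma$ such that for every $\beta>\alpha^{*}$ there is an open covering $\mW_\beta$ of $p_\beta[X]$ whose every finite subfamily, pulled back through $p_\beta^{-1}$, leaves a big residue in $X$. For Player~I in the Menger game on $X$, I would prescribe the following strategy: at round $n$, having observed Player~II's previous finite responses, Player~I enumerates $\psi^{-1}[\beta_n]$ as $\{G^n_k:k\in\w\}$ (countable), picks open approximations $U^n_k\supset G^n_k$ arising from the $G_\delta$-presentation of $G^n_k$, and plays the open cover $\mU_n$ of $X$ consisting of the pull-backs $\{p_{\beta_n}^{-1}[W]:W\in\mW_{\beta_n}\}\cup\{U^n_k:k\in\w\}$, for some sufficiently large $\beta_n$ chosen diagonally. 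Since $X$ is Menger, this strategy cannot be winning, so there is a play producing finite $\mV_n\subset\mU_n$ with $\bigcup_n\bigcup\mV_n=X$.

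Finally, I would exploit the Step~1 inclusion to extract a countable subfamily of $\mG$ covering $X$ from this winning play. Those elements of $\mV_n$ that are open approximations $U^n_k$ contribute at most countably many $G$'s from $\bigcup_n\psi^{-1}[\beta_n]$, while the pull-back terms leave a big residue in $X$ at every stage; the cylinder inclusion $T_{\beta_n}\subset p_{\beta_n}^{-1}[p_{\beta_n}[X]]\cup\bigcup\psi^{-1}[\beta_n]$ guarantees that the residual points are in fact covered by the countable $\mG$-family collected so far, yielding a countable subfamily of $\mG$ covering $X$ — contradicting bigness. The principal obstacle is the coupling in round~$n$: the open approximations $U^n_k$ must be tight enough that finite subcovers of them truly witness coverage by $G$-members, yet loose enough that Menger's property still applies. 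Careful $G_\delta$-level bookkeeping together with the cylinder structure of each $G^n_k$ (given by Lemma~\ref{standard_g_deltas}) should handle this step.
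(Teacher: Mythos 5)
Your proposal shares one ingredient with the paper (a Player~I strategy in the Menger game built from Lemma~\ref{small_covers}), but the overall route does not work, for two concrete reasons. First, the paper's proof is not a covering argument at all: it is a direct construction of a witness. At each round Player~I invokes Lemma~\ref{big_compact} to produce a compact $K_n\subset p_{\alpha_n}[A_n]$ with $A_n\cap p_{\alpha_n}^{-1}[K_n]$ still big, and the whole point is that $K=\bigcap_n p_{\alpha_n}^{-1}[K_n]$ is a \emph{nonempty} compact set whose points are shown to lie in $C_\alpha$ for $\alpha=\sup_n\alpha_n$ (using that Player~II won to verify $p_\alpha(z)\notin p_\alpha[X]$, and using the removed cylinders $S_n\supset G_n$ to verify $z\notin G$ for all $G$ with $\psi(G)<\alpha$). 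Your strategy never threads such compact sets through the play, so after $\omega$ rounds nothing is guaranteed to survive, and there is no candidate point for $C_\alpha$ and nothing to contradict. Second, your Step~1 inclusion $T_\alpha\subset p_\alpha^{-1}[p_\alpha[X]]\cup\bigcup\psi^{-1}[\alpha]$ is trivially satisfied by every point of $X$ via the first disjunct; it constrains only points that escape $p_\alpha[X]$ at some level, i.e.\ points outside $X$. It therefore cannot be leveraged to show that the residual points of $X$ in your game are covered by countably many members of $\mG$, and the final extraction step collapses.

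There is also a third problem you yourself flag but do not resolve: coverage of a point by an open approximation $U^n_k\supset G^n_k$ says nothing about coverage by $G^n_k$, and finitely many such approximations per round do not converge to the $G_\delta$ sets. In the paper the open supersets $S_n\supset G_n$ are used for the \emph{opposite} purpose: they are cylinders removed from the big set so that the surviving compact $K$ avoids $G_n$; they are never used to cover $X$ by members of $\mG$ (which would be impossible, since $X$ is assumed big throughout this part of the argument). To repair your proof you would need to abandon the goal of covering $X$ by countably many $G$'s and instead follow the paper's scheme: alternate Lemma~\ref{big_compact} (to keep a decreasing chain of compacta), removal of cylinder neighborhoods of the $G$'s with bookkeeping, and Lemma~\ref{small_covers} (to feed the Menger game), then read off a point of $C_{\sup_n\alpha_n}$ from the intersection of the compacta.
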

\begin{proof}
Given $\alpha_0\in\w_1$, we shall find $\alpha>\alpha_0$ such that
$C_\alpha\neq\emptyset$. Now we shall describe
a strategy  of Player $I$ in the Menger game on $X$.

{\it Round 0.} \ Set $A_0=X$. Since $A_0$ is big and productively Lindel\"of,
by Lemma~\ref{big_compact} there exists a compact $K_0\subset p_{\alpha_0}[A_0]$
such that $A_0':=A_0\cap p_{\alpha_0}^{-1}[K_0]$ is big. Let  $G_0\in\mG$ be such
that $\psi(G_0)<\alpha_0$. Since $A_0'\setminus G_0$ is big and non-big sets are
closed under countable unions,  there exists an open set $S_0\supset G_0$
such that  $p_\beta^{-1}[p_\beta[S_0]]=S_0$ for all $\beta\geq \psi(G_0)$,
and $A_0'':=A_0'\setminus S_0$ is big. By Lemma~\ref{small_covers}
applied to $A_0''$ there exists $\alpha_1>\alpha_0$ and $\mW_0\in\O(p_{\alpha_1}[A_0''])$
such that $A_0''\setminus p_{\alpha_1}^{-1}[\cup\mV]$ is big for all
$\mV\in[\mW_0]^{<\w}$. Then Player $I$ starts by choosing the
open covering $\mU_0=\{X\setminus A_0''\}\bigcup\{p_{\alpha_1}^{-1}[W]:W\in\mW_0\}$
of $X$. Suppose that Player $II$ replies by choosing
$\{X\setminus A_0''\}\bigcup\{p_{\alpha_1}^{-1}[W]:W\in\mV_0\}$ for some
finite $\mV_0\subset\mW_0$. Then we set $A_1=A_0''\setminus p_{\alpha_1}^{-1}[\cup\mV_0]$.
It follows from the above that $A_1$ is a big closed subspace of $X$.

{\it Round $n$}. Suppose that after $n-1$ rounds  the set $A_n\subset X$
of those $x\in X$ which have not yet
been covered by the choices of Player $II$, is closed\footnote{It is always closed by the definition
of the game.} and big. Suppose also that in the course of the previous rounds
player $I$ has constructed ordinals $\alpha_0<\alpha_1<\cdots<\alpha_{n}$.
Then player $I$ acts basically in the same way as in round $0$.
For the sake of completeness we repeat the argument.
Since $A_n$ is big and productively Lindel\"of,
by Lemma~\ref{big_compact} there exists  a compact $K_n\subset p_{\alpha_n}[A_n]$
such that $A_n':=A_n\cap p_{\alpha_n}^{-1}[K_n]$ is big. Let  $G_n\in\mG$ be such
that $\psi(G_n)<\alpha_n$. Since $A_n'\setminus G_n$ is big and non-big sets are
closed under countable unions,  there exists an open set $S_n\supset G_n$
such that  $p_\beta^{-1}[p_\beta[S_n]]=S_n$ for all $\beta\geq \psi(G_n)$,
and $A_n'':=A_n'\setminus S_n$ is big. By Lemma~\ref{small_covers}
applied to $A_n''$ there exists $\alpha_{n+1}>\alpha_n$ and $\mW_n\in\O(p_{\alpha_{n+1}}[A_n''])$
such that $A_n''\setminus p_{\alpha_{n+1}}^{-1}[\cup\mV]$ is big for all
$\mV\in[\mW_n]^{<\w}$. Then Player $I$ plays by choosing the
open covering $\mU_n=\{X\setminus A_n''\}\bigcup\{p_{\alpha_{n+1}}^{-1}[W]:W\in\mW_n\}$
of $X$. Suppose that Player $II$ replies by choosing
$\{X\setminus A_n''\}\bigcup\{p_{\alpha_{n+1}}^{-1}[W]:W\in\mV_n\}$ for some
finite $\mV_n\subset\mW_n$. Then we set $A_{n+1}=A_n''\setminus p_{\alpha_{n+1}}^{-1}[\cup\mV_n]$.
It follows from the above that $A_{n+1}$ is a big closed subspace of $X$.

In addition, by choosing $G_n$ in the $n$th round, Player $I$ makes sure
that each $G\in\mG$ with $\psi(G)<\sup_{k\in\w}\alpha_k$ has been chosen, using
 some straightforward bookkeeping. This completes our definition of a strategy of
player $I$.

Since $X$ is Menger, this strategy cannot be winning,
and hence there is a play in which Player $I$ uses the strategy described above and
loses. Let
$$\langle \alpha_n,A_n,A_n',A_n'',\mW_n,\mV_n,K_n, G_n, S_n:n\in\w\rangle $$
be the corresponding objects constructed in the course of this play.
Since this play has been lost by Player $I$ we have
$$X=\bigcup_{n\in\w}(X\setminus A_n'')\cup\bigcup_{n\in\w}p_{\alpha_{n+1}}^{-1}[\cup\mV_n].$$
Letting $\alpha=\sup_{n\in\w}\alpha_n$,
we claim that $C_\alpha\supset K:=\bigcap_{n\in\w}p_{\alpha_n}^{-1}[K_n]$.
Note that this would prove our lemma as $K$ is non-empty
because $\langle p_{\alpha_n}^{-1}[K_n]:n\in\w\rangle$ is a decreasing sequence of compact subspaces
of $I^{\w_1}$. Let us fix $z\in K$.

Given any $\beta<\alpha$, find $n$ with $\beta<\alpha_n$. Then
  $p_{\alpha_n}(z)\in  K_n\subset p_{\alpha_n}[A_n]\subset p_{\alpha_n}[X]$,
and hence $p_\beta(z)\in p_{\beta}[X]$.

Let us fix $G\in \mG$ with $\psi(G)<\alpha$. By the  requirement on the strategy of
the Player $I$ we have made at the end of its definition, we have that
$G=G_n$ for some $n\in\w$. Then $p_{\alpha_{n+1}}(z)\in K_{n+1}\subset p_{\alpha_{n+1}}[A_{n+1}]$
and $A_{n+1}\cap S_n=A_{n+1}\cap p_{\alpha_{n+1}}^{-1}[p_{\alpha_{n+1}}[S_n]]=\emptyset$
by  the construction. Therefore $p_{\alpha_{n+1}}(z)\not\in p_{\alpha_{n+1}}[S_n]$
and hence $z\not\in S_n\supset G_n$.

It suffices to show that $p_\alpha(z)\not\in p_\alpha[X]$. Suppose to the contrary
that $p_\alpha(z)=p_\alpha(x)$ for some $x\in X$.
Two cases are possible.

$1.$ \ $x\in X\setminus A_n''$ for some $n\in\w$. Then
$x\in X\setminus A_{n+1}$ since $A_{n+1}\subset A_n''$.
By the construction of $A_k$  we get that
$p_{\beta}[A_{k}\setminus A_{k+1}]=
p_{\beta}[A_k]\setminus p_{\beta}[A_{k+1}]$ for all $k\in\w$
and $\beta\geq \alpha_{k+1}$.
Indeed to get $A_{k+1}$ we make several steps, and at each of them
we remove from $A_k$ a set $T$ such that $T=p_\beta^{-1}[p_\beta[T]]$
for all $\beta\geq \alpha_{k+1}$. Since $X=A_0$, we have
that $ X\setminus A_{n+1}=\bigcup_{k\leq n}(A_k\setminus A_{k+1}), $
and hence
\begin{eqnarray*}
p_{\alpha_{n+1}}(x)\in p_{\alpha_{n+1}}[X\setminus A_{n+1} ] =p_{\alpha_{n+1}}[\bigcup_{k\leq n}(A_k\setminus A_{k+1})]= \\
=
\bigcup_{k\leq n} p_{\alpha_{n+1}}[A_k]\setminus p_{\alpha_{n+1}}[A_{k+1}]=
p_{\alpha_{n+1}}[X]\setminus p_{\alpha_{n+1}}[A_{n+1}].
\end{eqnarray*}
However, $p_{\alpha_{n+1}}(x)=p_{\alpha_{n+1}}(z)\in K_{n+1}\subset p_{\alpha_{n+1}}[A_{n+1}],$
a contradiction.

$2.$ \ $x\in p_{\alpha_{n+1}}^{-1}[\cup\mV_n]$ for some $n\in\w$. Then
$x\in X\setminus A_{n+1}$ because $A_{n+1}=A_n''\setminus p_{\alpha_{n+1}}^{-1}[\cup\mV_n] $
by the construction, and we have already seen in case $1$
that $x\in X\setminus A_{n+1}$ leads to a contradiction.
This completes our proof.
\end{proof}

Let us denote by $Y$  the union $\bigcup_{\alpha\in\w_1}C_\alpha$.

\begin{lemma} \label{y_lindelof}
$Y$ is Lindel\"of.
\end{lemma}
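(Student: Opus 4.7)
The plan is to prove $Y$ is Lindel\"of by contradiction, using the Menger game on $X$ along the lines of Lemma~\ref{c_alpha}. Suppose $\mU$ is an open cover of $Y$ by standard basic open subsets of $I^{\w_1}$ that admits no countable subcover; we aim to derive a contradiction.

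The central structural observation underlying the argument is this: for $z \in C_\alpha$ and a standard basic open neighborhood $B\ni z$ whose (finite) coordinate set $F$ lies entirely in $\alpha$, we have $B \cap X \neq \emptyset$. Indeed, choose $\beta \leq \alpha$ with $F \subset \beta$; since $p_\beta(z) \in p_\beta[X]$, any $x \in X$ with $p_\beta(x) = p_\beta(z)$ lies in $B$. This links the hypothetically bad cover $\mU$ of $Y$ with open subsets of $X$ through projections.

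Building on this, I would design a strategy for Player $I$ in the Menger game on $X$ that refines the one in Lemma~\ref{c_alpha}: in round $n$, pick $\alpha_{n+1}$, $K_n$, $G_n$ (bookkept through $\mG$), $S_n$, $A_n''$, and an open cover $\mW_n$ of $p_{\alpha_{n+1}}[A_n'']$ via Lemma~\ref{small_covers}, and \emph{in addition} include in $\mU_n$ countably many pullbacks of those $U \in \mU$ whose coordinate sets lie in $\alpha_{n+1}$, bookkept so that every such projection-localized restriction of $\mU$ appears in some $\mU_n$. Because $X$ is Menger, this strategy is not winning, so a losing play exists. As in Lemma~\ref{c_alpha}, $K := \bigcap_{n\in\w}p_{\alpha_n}^{-1}[K_n]$, with $\alpha=\sup_n\alpha_n$, is nonempty and contained in $C_\alpha \subset Y$; meanwhile Player $II$'s finite selections $\mV_n$ extract a countable subfamily of $\mU$, together with countably many pulled-back tails. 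The expected contradiction is that every $z\in Y$ must be caught by this countable family: the key observation guarantees that a basic neighborhood of $z$ with coordinates below $\alpha(z)$ meeting $X$ was bookkept into some $\mU_n$, and a ``leftover'' $z \in Y$ not covered would by bigness-style reasoning force a new big $A_\infty \subset X$ that survives the play, contradicting $\bigcup_n\bigcup\mV_n=X$.

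The main obstacle is precisely this last step: ensuring that the countable subfamily of $\mU$ extracted from Player $II$'s winning choices covers all of $Y$, not merely the single compact $K$ produced by the play. This requires a careful bookkeeping of both $\mG$ and of the projection-localized restrictions of $\mU$, together with an argument (analogous to the bigness/compactness dichotomy in Lemma~\ref{c_alpha}) showing that any uncovered $z\in Y$ would produce, through its associated $\alpha(z)$ and Alster witness, a configuration Player $I$'s strategy was designed to refute.
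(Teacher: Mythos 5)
There is a genuine gap here, and it is exactly the one you flag yourself: nothing in your game forces the countable family extracted from Player $II$'s moves to cover all of $Y$. The winning condition of the Menger game on $X$ concerns covering $X$ only, and $Y$ is disjoint from $X$ (every $z\in C_\alpha$ has $p_\alpha(z)\notin p_\alpha[X]$), so even if you insert pullbacks of members of $\mU$ into Player $I$'s covers, Player $II$ is free never to select any of them; a losing play for Player $I$ hands you one nonempty compact set inside one $C_\alpha$, not a subcover of $Y$.

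The paper closes this gap with two ideas your sketch is missing. First, a reduction: after closing $\mU$ under finite unions and open subsets, each $\mU_\alpha:=\mU\cap\mB_\alpha$ is countable and consists of sets supported on coordinates below $\alpha$, so it suffices to find a \emph{single} $\alpha$ with $Y\subset\cup\mU_\alpha$. Second, the key covering fact (Claim~\ref{cl01}): for $u\in C_\xi$ the whole cylinder $K_u=\{p_\xi(u)\}\times I^{\w_1\setminus\xi}$ lies in $C_\xi\subset Y$ and is compact, so a finite subfamily of $\mU$ covers it and some single basic $U\in\mU$ with $K_u\subset U$ has support contained in $\xi$; hence $C_\xi\subset\cup\mU_\xi$ comes for free, which is precisely the ``catch all of $Y$'' step you could not supply. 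With these in hand one does not redesign the game at all: assuming $Y\not\subset\cup\mU_\alpha$ for every $\alpha$, one adjoins the open sets $\cup\mU_\alpha$ to $\mG$ (setting $\psi(\cup\mU_\alpha)=\alpha$), checks that $X$ is still big for the enlarged family (this is where your observation that $p_\alpha(z)\in p_\alpha[X]$ for $z\in C_\gamma$ with $\gamma>\alpha$ is used, together with $G=p_\alpha^{-1}[p_\alpha[G]]$), and reapplies Lemma~\ref{c_alpha} verbatim to the enlarged family to obtain a nonempty $C'_\alpha= C_\alpha\setminus\cup\mU_\alpha$, contradicting Claim~\ref{cl01}. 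I recommend reorganizing your argument around these two steps rather than threading $\mU$ through Player $I$'s strategy.
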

\begin{proof}
 Let $\mU$ be an open covering $Y$. Without loss of generality we may assume
that $\mU$ is closed under finite unions and taking open subsets of its elements.
Let $\mU_\alpha=\mU\cap\mB_\alpha$. It suffices to show that $Y\subset \cup\mU_\alpha$ for some
$\alpha$. Suppose that this is not the case, set $\mG'=\mG\cup \{\cup\mU_\alpha:\alpha\in\w_1\}$,
and extend $\psi$ to $\mG'$ by letting $\psi(\cup\mU_\alpha)=\alpha$.
We claim that $X$ is not covered by any countable subfamily of $\mG'$. Indeed,
fix such a subfamily $\mG''$ and find $\alpha$ with $\psi[\mG'']\subset\alpha$, i.e.,
$\mG''\subset\{\cup\mU_\beta:\beta<\alpha\}\cup \{G\in\mG:\psi(G)<\alpha\}$.
Pick $z\in Y\setminus\bigcup_{\beta<\alpha}\cup\mU_\beta=Y\setminus\cup\mU_\alpha$.
Then $z\in C_\gamma$ for some $\gamma>\alpha$ by the following
\begin{claim} \label{cl01}
$C_\xi\subset\bigcup_{\eta<\xi}\cup\mU_\eta=\mU_\xi$ for all $\xi<\w_1$.
\end{claim}
\begin{proof}
Given $u\in C_\xi$, set $K_u= \{p_\xi(u)\}\times I^{\w_1\setminus\xi}$
and note that $K_u$ is a compact subspace of $Y$. Since $K_u\subset\cup\mU$,
there exists a finite
$\mV\subset\mU$ and an open set
$U=\prod_{\eta\in\w_1}U_\eta$, where $U_\eta=I$ for all $\eta\not\in F$
for some finite $F\subset \w_1$, and $U_\eta$ is an interval with rational
end-points for all $\eta\in F$, with the following properties:
$$ u\in K_u\subset U\subset\cup\mV. $$
$U\in\mU$ by our convention. Moreover, $U_\eta$ must obviously be equal to $I$
for all $\eta\geq \xi$, and hence $U\in\mU_\xi$.
\end{proof}
We proceed with the proof of Lemma~\ref{y_lindelof}.
Since $z\in C_\gamma$ for some $\gamma>\alpha$, we have
$z\not\in G$ for all $G\in\mG$ with $\psi(G)<\alpha$, and hence
$z\not\in G$ for any $G\in\mG''$. Pick $x\in X$ such that
$p_\alpha(x)=p_\alpha(z)$ and note that
$x\not\in G$ for any $G\in\mG''$ because $G=p_\alpha^{-1}[p_\alpha[G]]$
for all such $G$.

Now repeating the proof of Lemma~\ref{c_alpha} for $\mG'$
and $\psi$ extended to $\mG'$, we get that
there exists $\alpha\in\w_1$ such that the set
$C_\alpha'=\{z\in I^{\w_1}:p_\beta(z)\in p_\beta[X]$ for all $\beta<\alpha$,
$p_\alpha(z)\not\in p_\alpha[X]$, and $z\not\in G$ for all $G\in\mG'$ such that $\psi(G)<\alpha\}$
is non-empty. However,
$C_\alpha'=C_\alpha\setminus\bigcup_{\beta<\alpha}\cup\mU_\beta=
 C_\alpha\setminus\cup\mU_\alpha$ by the definition of $\mG'$ and $\psi\uhr\mG'\setminus\mG$,
 and $C_\alpha\setminus\cup\mU_\alpha=\emptyset$ by
Claim~\ref{cl01}, a contradiction.
\end{proof}

\begin{lemma} \label{nbh_of_y}
 Let $U$ be an open neighborhood of $Y$. Then $X\setminus U\subset\cup\mH$ for some
countable $\mH\subset\mG$.
\end{lemma}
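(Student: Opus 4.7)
The plan is to argue by contradiction, assuming that $X\setminus U$ is not covered by any countable subfamily of $\mG$ (i.e., that $X\setminus U$ is \emph{big} in the terminology preceding Lemma~\ref{big_compact}), and to construct a point of $Y\setminus U$, contradicting $Y\subset U$. The initial reduction uses Lemma~\ref{y_lindelof}: since $Y$ is Lindel\"of, cover it by countably many basic open subsets of $I^{\w_1}$, all contained in $U$. Each has finite support, so their union has the form $U'=p_\gamma^{-1}[V]$ for some $\gamma<\w_1$ and some open $V\subset I^\gamma$; as $X\setminus U'\supset X\setminus U$ is still big and $Y\subset U'\subset U$, I may replace $U$ by $U'$. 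This cylindrical structure of $U$ gives the key identity $p_\gamma[X\setminus U]\cap V=\emptyset$.

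The main step is to re-run the strategy of Player~$I$ from the proof of Lemma~\ref{c_alpha} in the Menger game on $X$, but starting with $A_0=X\setminus U$ (a closed, hence productively Lindel\"of and Menger, subspace of $X$, and big by assumption) and $\alpha_0=\gamma$. Lemmas~\ref{big_compact} and~\ref{small_covers}, being stated for big closed subspaces of $X$, apply unchanged to the successive $A_n\subset X\setminus U$. Since $X$ is Menger, this strategy cannot be winning, so some play is lost by Player~$I$. From that play extract $\alpha=\sup_n\alpha_n$ and a point $z$ in the non-empty compact set $\bigcap_n p_{\alpha_n}^{-1}[K_n]$.

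It remains to check that $z\in Y\setminus U$. Because $K_n\subset p_{\alpha_n}[A_n]\subset p_{\alpha_n}[X\setminus U]$ and $\gamma\leq\alpha_n$, we have $p_\gamma(z)\in p_\gamma[X\setminus U]$, hence $p_\gamma(z)\notin V$, so $z\notin U$. For $z\in C_\alpha$: the conditions $p_\beta(z)\in p_\beta[X]$ for $\beta<\alpha$ and $z\notin G$ for all $G\in\mG$ with $\psi(G)<\alpha$ follow exactly as in Lemma~\ref{c_alpha}, the second via the same bookkeeping $G=G_n$ and the conclusion $z\notin S_n\supset G_n$. For $p_\alpha(z)\notin p_\alpha[X]$: if $p_\alpha(x)=p_\alpha(z)$ for some $x\in X$, then $p_\gamma(x)=p_\gamma(z)\notin V$, so $x\notin U$, i.e., $x\in A_0$; the case analysis from Lemma~\ref{c_alpha} then applies verbatim with $A_0=X\setminus U$ in place of $X$, producing the required contradiction.

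The main obstacle is ensuring simultaneously that the constructed limit point $z$ lies outside $U$ \emph{and} satisfies the original defining conditions of $C_\alpha$ with respect to $X$ (not just with respect to $X\setminus U$). Both are handled by the initial reduction of $U$ to a cylindrical set with countable support and the choice $\alpha_0\geq\gamma$; without this reduction the construction would only place $z$ in an analogue of $C_\alpha$ built from $X\setminus U$, which need not meet the original $Y$.
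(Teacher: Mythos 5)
Your proof is correct, but it is organized differently from the paper's. The paper disposes of this lemma in three lines by reusing the proof of Lemma~\ref{y_lindelof}: it takes $\mU$ to be the family of \emph{all} open subsets of $U$, adjoins the sets $\cup\mU_\alpha$ to $\mG$ with $\psi(\cup\mU_\alpha)=\alpha$, and observes that the argument already given there (re-running Lemma~\ref{c_alpha} for the enlarged family and invoking Claim~\ref{cl01}) yields $X\subset\cup\mU_\alpha\cup\bigcup\mH$ for some $\alpha$ and countable $\mH\subset\mG$, whence $X\setminus U\subset\cup\mH$. You instead use the \emph{statement} of Lemma~\ref{y_lindelof} (Lindel\"ofness of $Y$) to shrink $U$ to a single cylinder $U'=p_\gamma^{-1}[V]$ with $Y\subset U'\subset U$, and then re-run the Menger game of Lemma~\ref{c_alpha} with the big closed set $X\setminus U'$ as the starting position and $\alpha_0=\gamma$; the limit point $z$ then lies in $C_\alpha\setminus U'$, contradicting $Y\subset U'$. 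The two places where your modification could break -- membership of $z$ in the original $C_\alpha$ (defined via $X$, not $X\setminus U'$) and the extra case $x\in X\cap U'$ in the verification that $p_\alpha(z)\notin p_\alpha[X]$ -- are both correctly handled by the cylindrical form of $U'$ and the choice $\alpha_0=\gamma$, which give $p_\gamma(z)\in p_\gamma[X\setminus U']\subset I^\gamma\setminus V$. What your route buys is that it bypasses Claim~\ref{cl01} and the extension of $\mG$ and $\psi$ entirely, at the cost of repeating the game construction once more; the paper's route buys brevity by piggybacking on the proof of Lemma~\ref{y_lindelof}, of which this lemma is essentially a by-product. One cosmetic remark: you actually produce $z\in Y\setminus U'$ rather than $z\in Y\setminus U$, but since $Y\subset U'$ by construction this is already the desired contradiction.
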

\begin{proof}
Let $\mU$ be the family of all open subsets of $U$ and $\mU_\alpha$ be the intersection $\mU\cap\mB_\alpha$.
In the proof of Lemma~\ref{y_lindelof} we have established that there exists $\alpha\in\w_1$
and $\mH\in[\mG]^\w$ such that $X\subset \cup\mU_\alpha\cup\bigcup\mH$. (More precisely,
by the last paragraph of the proof of Lemma~\ref{y_lindelof} we have  a
contradiction if $X\not\subset \cup\mU_\alpha\cup\bigcup\mH$ for all $\alpha$ and $\mH\in[\mG]^\w$).
Therefore $X\setminus U\subset X\setminus \cup\mU_\alpha\subset\cup\mH$.
\end{proof}

We can finish now the proof of Theorem~\ref{alst_general}
by deriving a contradiction from $X$ being big as follows.
Let $\mG=\{G_\alpha:\alpha<\w_1\}$ and pick $x_\alpha\in X\setminus\bigcup_{\beta<\alpha}G_\beta$.
Consider the space $Z=\{x_\alpha:\alpha<\w_1\}\cup Y$ with the following
topology $\tau$: all $x_\alpha$'s are isolated, and the basic open neighborhoods of
 $y\in Y$ have the form $U\cap Z$, where $U\ni y$ is open in $I^{\w_1}$.
 $(Z,\tau)$ is Lindel\"of: $(Y,\tau\uhr Y)$ is Lindel\"of by Lemma~\ref{y_lindelof},
and by Lemma~\ref{nbh_of_y} and the definition of $\tau$ we have that
 $Z\setminus U$ is countable for any open $U\in \tau$
containing    $Y$.
Now $X\times Z$ is not Lindel\"of when $Z$ is considered with the topology $\tau$
as $\{(x_\alpha, x_\alpha):\alpha<\w_1\}$ is a closed discrete uncountable subspace
of this product, a contradiction to $X$ being productively Lindel\"of.

\section{Weakly Alster spaces}

The inspiration for the results in this section is the characterization for Hurewicz spaces presented in Theorem 6 of \cite{Tall2011}:

\begin{theorem}[Tall]\label{TallHure}
  A regular Lindel\"of space $X$ is Hurewicz if, and only if, for every \v Cech-complete space $G \supset X$ there is a $\sigma$-compact space $K$ such that $X \subset K \subset G$.
\end{theorem}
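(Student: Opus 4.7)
My plan is to prove the two implications separately. Both exploit the fact that a Tychonoff space is \v Cech-complete precisely when it is a $G_\delta$ in some (equivalently, every) compactification, together with the observation that a regular Lindel\"of space is Tychonoff.

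For the forward implication, fix a \v Cech-complete $G\supset X$ and embed $G$ in $\beta G$, writing $G=\bigcap_{n\in\w}U_n$ with each $U_n$ open in $\beta G$ and, after replacing $U_n$ by $\bigcap_{k\leq n}U_k$, assume $(U_n)_n$ is decreasing. Using the normality of $\beta G$, for each $n$ and each $x\in X\subset U_n$ I pick an open $W^n_x\ni x$ in $\beta G$ with $\overline{W^n_x}^{\beta G}\subset U_n$; the collection $\mU_n=\{W^n_x\cap X:x\in X\}$ is an open cover of $X$. I then apply the Hurewicz property to $(\mU_n)_{n\in\w}$ to obtain finite $\mV_n\subset\mU_n$ such that each $x\in X$ lies in $\bigcup\mV_n$ for all but finitely many $n$. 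Set $K_n=\bigcup\{\overline{W^n_x}^{\beta G}:W^n_x\cap X\in\mV_n\}$, a compact subset of $U_n$ satisfying $K_n\supset\bigcup\mV_n$. I claim that
$$K=\bigcup_{n\in\w}\bigcap_{m\geq n}K_m$$
is the required space. Each $\bigcap_{m\geq n}K_m$ is closed in $\beta G$, hence compact, and is contained in $\bigcap_{m\geq n}U_m=G$, so $K$ is $\sigma$-compact and $K\subset G$; for every $x\in X$ there is $n_x$ with $x\in K_m$ for all $m\geq n_x$, so $x\in\bigcap_{m\geq n_x}K_m\subset K$.

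For the backward implication, I argue by contraposition: suppose $(\mU_n)_{n\in\w}$ is a sequence of countable open covers of $X$ (countability coming from Lindel\"ofness) that witnesses the failure of the Hurewicz property. For each $U\in\mU_n$ I fix an open $\tilde U\subset\beta X$ with $\tilde U\cap X=U$, set $V_n=\bigcup\{\tilde U:U\in\mU_n\}$, and let $G=\bigcap_{n\in\w}V_n$. Then $G$ is a $G_\delta$ in the compact Hausdorff space $\beta X$, hence \v Cech-complete, and $X\subset G$. By hypothesis there is a $\sigma$-compact $K$ with $X\subset K\subset G$; write $K=\bigcup_n K_n$ with each $K_n$ compact and, after taking cumulative unions, $(K_n)_n$ increasing. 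Since $K_n\subset V_n$, compactness yields a finite $\mV_n\subset\mU_n$ with $K_n\subset\bigcup\{\tilde U:U\in\mV_n\}$. Given $x\in X$, fix $m_0$ with $x\in K_{m_0}$; for every $n\geq m_0$ then $x\in K_n$, so $x\in\tilde U\cap X=U$ for some $U\in\mV_n$ and therefore $x\in\bigcup\mV_n$. Thus $(\mV_n)_{n\in\w}$ satisfies the Hurewicz condition for $(\mU_n)_{n\in\w}$, contradicting our choice.

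The main technical obstacle is the choice of $K$ in the forward direction. The naive candidate $\bigcup_n K_n$ fails because $K_n\subset U_n$ does not entail $K_n\subset G=\bigcap_m U_m$; the dual candidate $\bigcap_n\bigcup_{m\geq n}K_m$ sits inside $G$ but is only $K_{\sigma\delta}$, not $\sigma$-compact. The liminf-type construction above threads the needle: intersecting the compact pieces $K_m$ over tails of $\w$ forces each piece inside $G$ by monotonicity of $(U_n)_n$, while the eventual-cover clause of the Hurewicz property is exactly what places every $x\in X$ into one of these tail-intersections.
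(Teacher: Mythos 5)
Your proof is correct. Note first that the paper itself gives no proof of this statement: it is imported verbatim from Tall's paper (Theorem 6 of \cite{Tall2011}), so there is no in-paper argument to compare against; what you have written is a complete, self-contained derivation along the standard lines of Hurewicz's classical characterization. Both directions check out: in the backward direction the reduction to countable covers, the passage to $\tilde U\subset\beta X$, and the use of an increasing exhaustion of $K$ are all sound, and the contradiction with the chosen witnessing sequence is immediate. In the forward direction the one genuinely delicate point is the one you flag yourself, namely that $\bigcup_n K_n$ need not lie in $G$; your replacement $K=\bigcup_{n}\bigcap_{m\geq n}K_m$ handles this correctly, since each tail intersection is a closed subset of the compact set $K_n$ and sits inside $\bigcap_{m\geq n}U_m=G$ by the monotonicity you arranged, while the ``all but finitely many'' clause of the Hurewicz selection is exactly what puts each point of $X$ into some tail intersection. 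The only hypotheses you use implicitly --- that a regular Lindel\"of space is Tychonoff so that $\beta X$ exists, and that a $G_\delta$ subset of a compact Hausdorff space is \v Cech-complete --- are standard and correctly invoked.
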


The standard definition for Hurewicz spaces is in terms of a selection principle involving $\gamma$-coverings. It was proved in \cite[Theorem 7]{Tall2011} that every regular Alster space is Hurewicz. And, as in the previous section, the Alster property implies productively Lindel\"of. In this section, we present a new definition, called weakly Alster, that is in-between the Alster and productively Lindel\"of properties. It is not know by us if this new property is actually equivalent to any of the other two in general. But one advantage is that the relation between weakly Alster and Hurewicz properties is immediate because of Theorem \ref{TallHure}.

\begin{definition}
  Let $X$ be a regular space. We say that it is \emph{weakly Alster} if for every covering $\mG$ made by $G_\delta$ sets of $\beta X$ such that $\{G \cap X: G \in \mG\}$ is an Alster covering for $X$, there is a $\sigma$-compact space $Y \subset \beta X$ such that $X \subset Y \subset \bigcup \mG$.
\end{definition}

\begin{proposition}
  Every Alster space is weakly Alster.
\end{proposition}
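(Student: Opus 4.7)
My plan is to refine $\mG$ to a family $\mG'$ consisting of \emph{compact} $G_\delta$ subsets of $\beta X$, each still contained in some element of $\mG$, in such a way that $\{G'\cap X:G'\in\mG'\}$ is again an Alster covering of $X$. Once this refinement is in hand, the Alster property of $X$ applied to $\{G'\cap X:G'\in\mG'\}$ will yield a countable subfamily $\{G'_n:n\in\w\}\subset\mG'$ with $X\subset\bigcup_{n\in\w}G'_n$. Then $Y:=\bigcup_{n\in\w}G'_n$ will be a countable union of compact subspaces of $\beta X$, hence $\sigma$-compact, and will automatically satisfy $X\subset Y\subset\bigcup\mG$ because every $G'_n$ lies inside some element of $\mG$.

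The construction of $\mG'$ will rest on the normality of $\beta X$ (which is well-defined and compact Hausdorff, since an Alster space is Lindel\"of and a regular Lindel\"of space is Tychonoff). Given any $G\in\mG$ and any compact $K\subset G$, I will write $G=\bigcap_{m\in\w}U_m$ with each $U_m$ open in $\beta X$. For every $m$, Urysohn's lemma applied to the disjoint closed sets $K$ and $\beta X\setminus U_m$ yields a continuous function $f_m:\beta X\to[0,1]$ with $f_m\uhr K\equiv 0$ and $f_m\uhr(\beta X\setminus U_m)\equiv 1$. Then $G^*:=\bigcap_{m\in\w}f_m^{-1}([0,1/2])$ is closed in the compact space $\beta X$, hence compact, and simultaneously a $G_\delta$; one easily checks $K\subset G^*\subset G$. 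I will let $\mG'$ be the family of all compact $G_\delta$ subsets of $\beta X$ that are contained in some element of $\mG$.

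Verifying that $\{G'\cap X:G'\in\mG'\}$ is an Alster covering of $X$ is then straightforward: each $G'\cap X$ is a $G_\delta$ subset of $X$, and for every compact $K\subset X$ the Alster assumption on $\mG$ supplies some $G\in\mG$ with $K\subset G$, so the construction above produces a $G^*\in\mG'$ with $K\subset G^*$, hence $K\subset G^*\cap X$. Specializing to singletons gives in addition $X\subset\bigcup\mG'$. I do not foresee any serious obstacle: the only delicate technical point is invoking Urysohn's lemma in $\beta X$ to squeeze a compact $G_\delta$ between a compact set and a $G_\delta$ superset, which is automatic from the compact Hausdorffness of $\beta X$.
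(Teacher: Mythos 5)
Your proposal is correct and follows essentially the same route as the paper: both arguments refine $\mG$ to a family of compact $G_\delta$ subsets of $\beta X$ (the paper interpolates nested open sets with $\cl_{\beta X}(A^G_{n+1})\subset A^G_n$, you use Urysohn's lemma — the same normality fact in $\beta X$) and then apply the Alster property to the refined covering to extract the countable subfamily whose union is the desired $\sigma$-compact $Y$.
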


\begin{proof}
  Let $X$ be an Alster space. Let $\mG$ be a covering for $X$ as in the definition of weakly Alster. We may suppose, taking a refinement if necessary,
that for each $G \in \mG$ there is a sequence $(A^G_n)_{n \in \w}$ of open sets in $\beta X$ such that $G = \bigcap_{n \in \w} A^G_n $ and
 $\mathit{cl}_{\beta X}(A^G_{n + 1}) \subset A^G_n$ for all $n$. In other words, we may assume that each $G\in\mG$ is compact.
 Since $X$ is Alster and $\{X\cap G:G\in\mG\}$ is an Alster cover of $X$,
there is a sequence $(G_n)_{n \in \w}$ of elements of $\mG$ such that $X \subset \bigcup_{n \in \w} G_n$.
This completes our proof.
\end{proof}

\begin{proposition}
  Every weakly Alster space is productively Lindel\"of.
\end{proposition}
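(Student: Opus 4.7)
The plan is to take an arbitrary Lindel\"of space $L$ together with an open cover $\mW$ of $X\times L$ by basic open rectangles, and to extract a countable subcover by pushing the weakly Alster hypothesis through $\beta X$. First I would perform a standard tube-lemma step: for every compact $K\subset X$ and every $l\in L$, the compact set $K\times\{l\}$ is covered by finitely many elements of $\mW$; taking the union of their $X$-parts and the intersection of their $L$-parts produces open sets $U_K^l\supset K$ in $X$ and $V_K^l\ni l$ in $L$ such that $U_K^l\times V_K^l$ is contained in a finite union of elements of $\mW$. Using Lindel\"ofness of $L$, choose countably many $(l_n^K)_{n\in\w}$ with $L=\bigcup_n V_K^{l_n^K}$.

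Next I would transfer the construction to $\beta X$. For each $K$ and $n$ fix an open set $\tilde U_K^{l_n^K}\subset\beta X$ with $\tilde U_K^{l_n^K}\cap X=U_K^{l_n^K}$ and set $G_K^\beta=\bigcap_{n\in\w}\tilde U_K^{l_n^K}$. Then $G_K^\beta$ is a $G_\delta$ subset of $\beta X$ containing $K$, and
\[ G_K^\beta\times L\subset\bigcup_{n\in\w}\tilde U_K^{l_n^K}\times V_K^{l_n^K}, \]
because any point of $G_K^\beta$ lies in every $\tilde U_K^{l_n^K}$ while its $L$-coordinate falls into some $V_K^{l_n^K}$. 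The family $\mG=\{G_K^\beta:K\subset X\text{ compact}\}$ consists of $G_\delta$ subsets of $\beta X$, covers $X$, and $\{G_K^\beta\cap X:K\subset X\text{ compact}\}$ is an Alster cover of $X$ because every compact $C\subset X$ lies in $G_C$. The weakly Alster hypothesis therefore delivers a $\sigma$-compact $Y\subset\beta X$ with $X\subset Y\subset\bigcup\mG$.

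To finish, observe that $Y\times L$ is Lindel\"of, being a countable union of compact-times-Lindel\"of slices. By the displayed inclusion, the open family $\{\tilde U_K^{l_n^K}\times V_K^{l_n^K}:K\subset X\text{ compact},\,n\in\w\}$ covers $Y\times L$, so it admits a countable subcover. Intersecting this subcover with $X\times L$ yields countably many rectangles $U_{K_j}^{l_{n_j}^{K_j}}\times V_{K_j}^{l_{n_j}^{K_j}}$ covering $X\times L$, each of which is contained in a finite union of elements of $\mW$ by construction; concatenating these finite subcollections produces the desired countable subcover of $\mW$.

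The main obstacle I foresee is the $\beta X$-bookkeeping: the extensions $\tilde U_K^l$ are non-canonical, and one must verify both that $K\subset G_K^\beta$ (automatic from $K\subset U_K^l\subset\tilde U_K^l$) and that the open rectangles in $\beta X\times L$ genuinely cover the entirety of $G_K^\beta\times L$, not merely of $G_K\times L=G_K^\beta\cap X\times L$. Once this is in place, the remainder is the tube lemma combined with the fact that $\sigma$-compact $\times$ Lindel\"of is Lindel\"of.
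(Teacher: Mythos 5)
Your proof is correct and follows essentially the same route as the paper's: a tube-lemma step plus Lindel\"ofness of the second factor to build, for each compact $K$, a $G_\delta$ set in $\beta X$ contained in a countable union of open rectangles, then the weakly Alster property to interpolate a $\sigma$-compact $Y$ between $X$ and the union of these $G_\delta$ sets, and finally Lindel\"ofness of $Y$ times the second factor to extract the countable subcover. The only cosmetic difference is that you track ``contained in a finite union of elements of $\mW$'' explicitly rather than assuming the cover closed under finite unions.
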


\begin{proof}
  Let $X$ be a weakly Alster space and let $Z$ be a Lindel\"of space. Let $\mC$ be a basic open covering for $X \times Z$.
 We may suppose that, for each compact $K \subset X$ and each $z \in Z$, there are open sets $A_{K, z}, B_{K, z}$ such that
$K \times \{z\} \subset A_{K, z} \times B_{K, z} \in \mC$. Also, we may suppose that each $A_{K, z}$ is an open set in $\beta X$.
Since $Z$ is Lindel\"of, for each compact $K \subset X$, there is a sequence $(z_n)_{n \in \w}$
(depending on $K$) such that $K \times Z \subset \bigcup_{n \in \w} A_{K, z_n} \times B_{K, z_n}$.
Let $G_K = \bigcap_{n \in \w} A_{K, y_n}$. Let $Y \subset \beta X$ be a $\sigma$-compact such that $X \subset Y \subset \bigcup_{K \in \mK(X)} G_K$.

Fix $(y, z) \in Y \times Z$. Let $K$ be such that $y \in G_K$. Let $z_n$ be such that $z \in B_{K, z_n}$. Note that $(y, z_n) \in A_{K, z_n} \times B_{K, z_n}$. Since $Y$ is $\sigma$-compact, there is a countable subcovering for $Y \times Z$. Note that this is also a countable subcovering for $X \times Z$.
\end{proof}

Since under CH, productively Lindel\"of and Alster is the same for Tychonoff spaces with weight $\leq \aleph_1$, the following questions are natural:

\begin{question}
  Is every  productively Lindel\"of Tychonoff space a weakly Alster space? Under $CH$?
\end{question}

\begin{question}
  Is every weakly Alster space an Alster space under CH?
\end{question}

\begin{question}
  Is every Tychonoff weakly Alster space with weight $\leq \mathfrak c$ an Alster space?
\end{question}

\end{document}